\sloppy\pagestyle{plain}%
\theoremstyle{plain}
\newtheorem*{theorem*}{Theorem}
\newtheorem*{lemma*} {Lemma}
\newtheorem*{corollary*} {Corollary}
\newtheorem*{proposition*} {Proposition}
\newtheorem{theorem}{Theorem}[section]
\newtheorem{lemma}[theorem]{Lemma}
\newtheorem{proposition}[theorem]{Proposition}
\newtheorem{definition}{Definition}
\newtheorem*{notation}{Notation}
\theoremstyle{definition}
\newtheorem{convention}[theorem]{Convention}
\def \Z {\mathbb{Z}}
\def \C {\mathbb{C}}
\def \P {\mathbb{P}}
\def \bn{\begin{enumerate}}
\def \en{\end{enumerate}}
\def \bdm{\begin{displaymath}}
\def \edm{\end{displaymath}}
\def \bp{\begin{proof}}
\def\ep{\end{proof}}
\def\be{\begin{equation}}
\def\ee{\end{equation}}
\def\mfp{\mathfrak{p}}
\def\mfg{\mathfrak{g}}
\def\mfn{\mathfrak{n}}
\def\mfm{\mathfrak{m}}
\def\mfh{\mathfrak{h}}
\def\mcD{\mathcal{D}}
\def\sgla{\mfg=\bigoplus_{k\in \Z}\mfg_k}
\def\msc{\mathscr{C}}
\def\msk{\mathscr{K}}
\def\mca{\mathcal{A}}
\newcommand\DistTo{\xrightarrow{
   \,\smash{\raisebox{-0.1ex}{\ensuremath{\scriptstyle\sim}}}\,}}
\begin{document}

\title{EQUIVARIANT COMPACTIFICATIONS OF A NILPOTENT GROUP BY $G/P$}
\author{Daewoong Cheong}
\address{Korea Institute for Advanced Study \\
207-43 Cheongryangri 2-dong\\
Seoul, 130-722, Korea} \email{daewoongc@kias.re.kr}

\date{March 12, 2015}
\subjclass[2010]{14L30, 53C15 and 32M10}

\begin{abstract}
Let $G$ be a simple complex algebraic group, $P$ a parabolic subgroup of $G$ and $N$ the unipotent radical of $P.$  The so-called equivariant compactification of $N$ by $G/P$ is given by an action of $N$ on $G/P$ with a dense open orbit isomorphic to $N$. In this article, we investigate
how many such equivariant compactifications there exist. Our result says that
there is a unique equivariant compactification of $N$ by $G/P$, up to isomorphism, except $\P^n$.
\end{abstract}

\maketitle

\section{Introduction}

Throughout, we will work over complex numbers.  Let $H$ be an algebraic group. Then a projective variety $M$  is called an {\it equivariant compactification} of $H$ if there is an algebraic $H$-action $H\times M\rightarrow M $ with a Zariski open orbit $O$, equivariantly biregular to $H$. In this situation, the action of $H$ is called an {\it Equivariant Compactification structure} (for short, {\it  EC-structure}) on $M$.
An isomorphism between two EC-structures $A_i:H\times M\rightarrow M$ for $i=1,2$ consists of  an automorphism $F:H\rightarrow H$ and a biregurlar morphism $\Psi:M\rightarrow M$ such that the diagram
$$\xymatrix{H\times M \ar[r]^-{A_1}\ar[d]_{(F,\Psi)}&M\ar[d]^{\Psi}\\
H\times M\ar[r]^-{A_2}&M}$$ commutes.
\smallskip

In  \cite{HT}, Hassett and Tschinkel studied EC-structures on $\P^n$
 of the vector group $\mathbb{G}_a^n$. They showed that there is a correspondence between EC structures of the vector group and Artin local algebras of length $n$. Then, by classifying such algebras, they showed that there are  finitely (resp. infinitely) many isomorphism classes of EC-structures on $\P^n$ for $n\geq2$ (resp. $n\geq 6)$.

Arzhantsev considered EC-structures of a vector group on a homogeneous space $G/P$  for a semisimple algebraic group $G$ and a parabolic subgroup $P$ of $G$ (\cite{Ar1}). His result, restricted to simple groups $G$,  says that $G/P$ is an equivariant compactification of a vector group exactly when  the unipotent radical $P_u$ of $P$ is commutative, or when $P_u$ is not commutative and there is a  pair $(\widetilde{G},\widetilde{P})$ such that  $G/P=\widetilde{G}/\widetilde{P}$  and the unipotent radical $\widetilde{P}_u$ of $\widetilde{P}$ is commutative; see Subsection \ref{subsection-auto} for $\widetilde{G}$ and $\widetilde{P}$. Arzhantsev also  raised the question of how many such EC-structures there are on $G/P$, which was  mentioned again  by Arzhantsev and   Sharoyko (\cite{AS1}).

Very recently, Fu and Hwang  proved  a more general result on EC-structures of a vector group and posed the question whether the analogue of their result holds for other linear algebraic groups (\cite{FH2}). Roughly, their result  says that if $M$ is a Fano manifold, different from $\P^n$, of Picard number $1$ having smooth VMRT, then all EC-structures on $M$ are isomorphic. See Section \ref{section:VMRT}  for the definition of VMRT.  There are various manifolds that satisfy the necessary condition on VMRT. They include all irreducible Hermitian symmetric homogeneous spaces and some non-homogeneous spaces (see Proposition of $6.14$ of \cite{FH1} for examples of non-homogeneous spaces). In particular, their result answers the question of  Arzhantsev and   Sharoyko (\cite{Ar1}, \cite{AS1})  on the classification of EC-structures of a vector group on $G/P$  for $P$ maximal.

Lastly, Devyatov also studied EC-structures on $G/P$ of a vector group, using relevant Lie algebra representations. To classify EC-srtuctures, he  first set up a correspondence between EC-structures on $G/P$ and multiplications with a certain property on Lie algebra representations, and then solved the initial EC-problem by classifying these multiplications.

In this article, we generalize the results on vector groups above to some non-commutative nilpotent groups $N$. To make a precise statement, let us fix some notations. Let $\mfg=\bigoplus_{k\in \Z}\mfg_k$ be a simple graded Lie algebra. Then there is a triple $(G,P,N)$ of groups associated with  $\mfg=\bigoplus_{k\in \Z}\mfg_k$, where $G$ is an (adjoint) algebraic group with Lie algebra $\mfg,$ $P$ is a parabolic subgroup of $G$ with Lie algebra $\mfp=\bigoplus_{k\geq 0}\mfg_k$, and $N$ is a unipotent radical of $P$, so that the Lie algebra of $N$ equals $\mfn=\bigoplus_{k>0}\mfg_k.$ Then our main theorem can be stated as follows.

\begin{theorem}
Let $\mfg=\bigoplus_{k}\mfg_k$ be a simple graded Lie algebra, and let $G,P$ and $N$ be given as above. Suppose that  the homogeneous space $G/P$ is  different from  $\P^n.$ Then there exists a unique EC-structure of $N$ on $G/P$, up to isomorphism.
\end{theorem}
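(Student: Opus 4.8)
The plan is to first convert the geometric uniqueness statement into a Lie-algebraic one, and then to resolve it by combining the VMRT rigidity recorded in the theorem of Fu and Hwang with a cohomological vanishing. Since $G/P$ is assumed different from $\P^n$, its connected automorphism group is $G$ itself, so that $H^0(G/P, T_{G/P}) = \mfg$. An EC-structure of $N$ therefore amounts to a Lie subalgebra $\mathfrak{v} \subset \mfg$ isomorphic to $\mfn$ whose corresponding unipotent subgroup $\exp(\mathfrak{v})$ acts on $G/P$ with a dense open orbit, and two EC-structures are isomorphic exactly when the associated subalgebras are conjugate under $G$ (after composing with an automorphism of $N$). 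The standard subalgebra is the opposite nilradical $\mfg_{<0} := \bigoplus_{k<0}\mfg_k$, which is abstractly isomorphic to $\mfn$ via the grading-reversing Chevalley involution and whose unipotent group sweeps out the big cell; this provides existence. The remaining task is to show that every admissible $\mathfrak{v}$ is $G$-conjugate to $\mfg_{<0}$.

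Second, I would extract from the VMRT the filtration that pins down $\mathfrak{v}$. The family of minimal rational curves on $G/P$ endows a general tangent space $T_x(G/P)$ with a natural filtration whose associated graded Lie algebra (the symbol) is precisely $\mfg_{<0}$, and the variety of minimal rational tangents $\msc_x \subset \P(T_x(G/P))$ is the $P$-orbit configuration determined by $\mfg_{-1}$. The key geometric step is to prove that the vector fields generating an EC-structure are compatible with this filtration: using that $\exp(\mathfrak{v})$ acts with an open orbit, together with the fact that the orbit-closure stratification must be swept out by the minimal rational curves, one sees that the filtration induced by $\mathfrak{v}$ on $T_x$ agrees with the VMRT filtration, so that the associated graded of $\mathfrak{v}$ is isomorphic, as a graded nilpotent Lie algebra, to $\mfg_{<0}$. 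For the Picard number one case this is essentially the content of the smooth-VMRT rigidity of Fu and Hwang; for non-maximal $P$ (higher Picard number) I would obtain the same conclusion component by component, applying the rigidity to each irreducible family of minimal rational tangents and then assembling the results.

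Finally, having arranged that $\mathfrak{v}$ is a filtered subalgebra with associated graded $\mfg_{<0}$, I would deduce that $\mathfrak{v}$ is conjugate to $\mfg_{<0}$ by a deformation-theoretic argument. The obstruction to straightening such a filtered deformation of the embedding $\mfg_{<0}\hookrightarrow \mfg$ into the graded one lives in the positive-degree part of the Lie algebra cohomology $H^1(\mfg_{<0},\mfg)$; by Kostant's theorem this cohomology is governed by length-one Weyl group elements and vanishes in the relevant degrees precisely when $G/P$ is not $\P^n$. Thus $\mathfrak{v}$ can be moved onto $\mfg_{<0}$ by an element of $P$ acting through the grading, which yields uniqueness. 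I expect the main obstacle to be the middle step: verifying that an a priori arbitrary nilpotent subalgebra acting with a dense orbit must respect the VMRT filtration, since this is where the global geometry of minimal rational curves (Cartan--Fubini type extension and the smoothness of $\msc_x$) has to be converted into the precise infinitesimal compatibility needed to run the cohomological argument, and where the passage from Picard number one to a general grading must be handled with care.
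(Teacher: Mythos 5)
There are genuine gaps in this proposal, concentrated in its first and third steps.

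The opening reduction rests on the claim that $\mathrm{Aut}^0(G/P)=G$ whenever $G/P\ne\P^n$, and this is false: for $(B_l,\{\alpha_l\})$ and $(G_2,\{\alpha_1\})$ the connected automorphism group is strictly larger than $G$ (of type $D_{l+1}$ and $B_3$ respectively), and these homogeneous spaces are not projective spaces. So "an EC-structure is a subalgebra $\mathfrak{v}\subset\mfg$" does not even get off the ground there; the paper has to handle these cases separately through the embedding $J:(\mfg,\mfp)\hookrightarrow(\tilde{\mfg},\tilde{\mfp})$ and the isomorphism $\rho\circ J:N\to\widetilde{N}$. The more serious problem is the final cohomological step. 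You assert that the positive-degree part of $H^1(\mfg_{<0},\mfg)$ "vanishes in the relevant degrees precisely when $G/P$ is not $\P^n$." By Kostant's theorem -- which is exactly how Yamaguchi proves the prolongation theorem quoted as Proposition 5.4 -- this is false: the positive-degree cohomology that controls whether $\mfg$ is the full prolongation of $\mfm=\mfg_{<0}$ is nonzero for every depth-one gradation (so for every irreducible Hermitian symmetric space, e.g.\ all Grassmannians), for every contact gradation, and for $(A_l,\{\alpha_1,\alpha_k\})$ and $(C_l,\{\alpha_1,\alpha_l\})$. For those cases knowing that $\mathrm{gr}(\mathfrak{v})\cong\mfg_{<0}$ as a graded nilpotent Lie algebra is genuinely insufficient to straighten $\mathfrak{v}$ onto $\mfg_{<0}$: one must additionally record an invariant structure on $\mfg_{-1}$ -- the VMRT $\msc_o$ in the depth-one and contact cases, the decomposition $\mfg_{-1}=\mfg_{-1}^{(1)}\oplus\mfg_{-1}^{(2)}$ in the remaining cases -- and prove that its stabilizer is exactly $\mfg_0$ (Lemmas 5.5 and 5.8 of the paper). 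Your argument as written would therefore fail on the most basic non-$\P^n$ examples.

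The middle step is also not secured. That an arbitrary nilpotent subalgebra acting with a dense orbit must be compatible with the VMRT filtration is asserted rather than proved, and the appeal to Fu--Hwang "component by component" for non-maximal $P$ does not work: their rigidity is powered by the Cartan--Fubini extension theorem, which requires Picard number one, and replacing it for higher Picard number is precisely the point of the paper's detour through differential systems and Tanaka--Yamaguchi prolongation. The paper's actual route is different in kind: it never classifies subalgebras of $\mfg$, but instead builds a local biholomorphism $\Psi:O_1\to O_2$ between the two open orbits (Proposition 6.4) preserving $D$, $\msc$, or the decomposition according to type, extends it globally using $\mca(U)\cong\mfg$ (resp.\ $\mca_\msc$, $\mca_\oplus$) for every connected open $U$, and checks equivariance. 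If you want to pursue the subalgebra-classification route, you would need to supply, at minimum, the correct list of cases where the positive prolongation is nonzero and a substitute rigidity statement for each.
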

We remark that existence of an EC-structure on $G/P$ follows directly from the Bruhat decomposition of $G/P$(\cite{BL1}).
 To prove uniqueness, we mostly follow a proving scheme Fu and Hwang made in \cite{FH2}. A main difference lies in which tool to use in order to obtain an extension of a locally defined map to the entire space, which is the most essential  in proving the uniqueness; Fu and Hwang used the Cartan-Fubini extension theorem which is applicable only to Fano manifolds of Picard number $1$, while we use the Yamaguchi's result on the prolongation of simple graded Lie algebras. For reader's convenience, let us give a bit more detailed sketch of proof of the uniqueness.
 \smallskip

 First of all, we divide all simple graded Lie algebras $\sgla$ except for two cases  $(A_l,\{\alpha_1\})$ and  $(C_l,\{\alpha_1\})$  into three types $I,II$ and $III$ (Subsection \ref{subsection:prolongation}).
 \begin{enumerate}
 \item The simple graded Lie algebras of type $I$ are all but the simple graded Lie algebras of types $II$ and $III$ right below.
 \item The simple graded Lie algebras of type $II$ are ones of depth one, or contact gradation, but not isomorphic with $(A_l,\{\alpha_1,\alpha_k\})$.
 \item The simple graded Lie algebras of type $III$ are ones isomorphic with  $(A_l,\{\alpha_1,\alpha_k\})$ or  $(C_l,\{\alpha_1,\alpha_k\})$.
 \end{enumerate}

 We note that  for a simple graded Lie algebra $\sgla,$ there are the natural differential system $D\subset T(G/P)$ and the so called VMRT $\msc$ on $G/P$, and,
in particular, for the type $III$ there is the natural decomposition $D=D^{(1)}\oplus D^{(2)}$ on $G/P.$ See Subsections \ref{subsection:differential systems}, \ref{subsection:VMRT} and \ref{subsection:decomposition} for the differential system $D$, VMRT $\msc$, and the decomposition $D=D^{(1)}\oplus D^{(2)}$, respectively.
Let $\mathcal{A}$ (resp. $\mca_\msc$) be the sheaf of Lie algebras of infinitesimal automorphisms preserving $D$ (resp. $\msc$) on $G/P$, and for simple graded Lie algebra of type $III,$ let $\mca_{\oplus}$ be the sheaf of Lie algebras of infinitesimal automorphisms on $G/P$ preserving the decomposition $D=D^{(1)}\oplus D^{(2)}$.
\smallskip

In order to prove the uniqueness, we suppose that there are two EC-structures of $N$ on $G/P$
$$A_i:N\times G/P\rightarrow G/P\hspace{0.08in}\textrm{for}\hspace{0.08in} i=1,2.$$
We only have to construct an isomorphism between two EC-structures $A_i$. As the first step toward a construction of the isomorphism,  we construct a local biholomorphic map on $G/P$ as follows.
  For each $i=1,2,$ $O_i$  be the Zariski open orbit for the EC-structure $A_i$, and, after fixing a point $x_i\in O_i$, define a biholomorphic map $a_i:N \rightarrow O_i$ by $a_i(h)=A_i(h,x_i)$. Then using two pull-backed differential systems $a_i^*(D|_{O_i})$ on $N,$ we construct a Lie algebra isomorphism $f:\mfn \rightarrow \mfn$ which induces a group isomorphism $F:N\rightarrow N$. Then, through the biholomorphic maps $a_i$, the group isomorphism $F$, in turn, gives rise to a biholomorphic map $\Psi:O_1\rightarrow O_2$ that  preserves $D$, $\msc,$ or the decomposition $D=D^{(1)}\oplus D^{(2)}$ for types $I, II$  or $III,$ respectively. This is a content of Proposition \ref{prop3}.

 As the second step, we need to extend the local biholomorphic $\Psi:O_1\rightarrow O_2.$ To do this, we first characterize $\mfg$ as a subalgebra of the Lie algebra of holomorphic vector fields on any connected open subsets $U\subset G/P$ as follows.
 \begin{proposition}(Propositions \ref{pro:typeI}, \ref{pro:typeII}, \ref{prop:type III})
Let $\sgla$ be a simple graded Lie algebra, $G/P$ its associated homogeneous space. Then for any connected open subset $U\subset G/P,$ the Lie algebra of sections $\mca(U)$ (resp. $\mca_\msc(U)$, $\mca_\oplus (U)$) is  isomorphic to $\mfg$ if $\sgla$ is type $I$ (resp. type II, type III).
\end{proposition}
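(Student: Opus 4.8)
The plan is to establish all three isomorphisms by a single two-step scheme: an easy inclusion $\mfg\hookrightarrow\mca(U)$ (resp. $\mca_\msc(U)$, $\mca_\oplus(U)$) coming from the $G$-action, followed by the reverse inclusion extracted from the Tanaka--Yamaguchi theory of prolongations of simple graded Lie algebras. The point is that the choice of geometric datum ($D$, $\msc$, or the decomposition $D=D^{(1)}\oplus D^{(2)}$) is dictated in each type exactly by which structure has algebraic prolongation equal to $\mfg$.

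For the easy direction, I would use that the fundamental vector fields of the $G$-action on $G/P$ assemble into a Lie algebra homomorphism $\mfg\to H^0(G/P,T(G/P))$. Since $D$, the VMRT $\msc$, and the decomposition $D=D^{(1)}\oplus D^{(2)}$ are all $G$-invariant structures on $G/P$, the image consists of infinitesimal automorphisms of the respective structures, and so restricts to sections lying in $\mca(U)$, $\mca_\msc(U)$ and $\mca_\oplus(U)$. This homomorphism is injective because the $G$-action on $G/P$ is infinitesimally effective ($G$ is adjoint simple and $P$ contains no nontrivial normal subgroup), and injectivity survives restriction to any connected open $U$ since a nonzero holomorphic vector field cannot vanish on a nonempty open set. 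This yields $\mfg\hookrightarrow\mca(U)$ together with its two analogues.

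For the reverse inclusion, I would invoke Tanaka's prolongation theory together with Yamaguchi's computation. At every point the symbol of $D$ is the nilpotent negative part $\mfm=\bigoplus_{k<0}\mfg_k$, so a section of $\mca(U)$ is an infinitesimal automorphism of a regular differential system of type $\mfm$; by Tanaka's theory the space of such automorphisms has dimension at most that of the algebraic prolongation of $\mfm$ (or of its reduction by an imposed $\mfg_0$-structure). Yamaguchi's theorem then evaluates these prolongations on simple graded Lie algebras: for type $I$ the prolongation of $\mfm$ is already $\mfg$, so $\dim\mca(U)\le\dim\mfg$ and the inclusion of the first step forces $\mca(U)\cong\mfg$. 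For the depth-one and contact gradations of type $II$ the prolongation of $\mfm$ alone is strictly larger — infinite-dimensional in the contact case — and one recovers $\mfg$ only after reducing the structure group so that the admissible frames are those compatible with the VMRT $\msc$; the prolongation of this reduced structure is $\mfg$, whence $\mca_\msc(U)\cong\mfg$. For the two exceptional cases $(A_l,\{\alpha_1,\alpha_k\})$ and $(C_l,\{\alpha_1,\alpha_k\})$ of type $III$, the splitting $D=D^{(1)}\oplus D^{(2)}$ supplies the required reduction, whose prolongation is again $\mfg$, giving $\mca_\oplus(U)\cong\mfg$.

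The main obstacle I anticipate is the dictionary between the geometric structures on $G/P$ and the algebraic reductions to which Yamaguchi's theorem literally applies: one must verify that preserving $\msc$ is equivalent to preserving the $\mfg_0$-reduction of the frame bundle of $D$ whose prolongation Yamaguchi computes in the type $II$ case, and similarly that preserving the splitting $D=D^{(1)}\oplus D^{(2)}$ corresponds to the correct reduction in type $III$. Once these identifications are in place, the reverse inclusion is purely the prolongation dimension bound, and combining it with the injection from the first step closes the argument by a dimension count. The one remaining technical point is that a structure-preserving holomorphic vector field on $U$ is determined by a finite jet at a single point, which is exactly what legitimizes comparing the infinite-looking $\mca(U)$ with the finite-dimensional algebraic prolongation.
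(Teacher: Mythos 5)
Your proposal is correct and follows essentially the same route as the paper: both arguments rest on Yamaguchi's prolongation theorem together with the identification of $\mfg_0$ with the stabilizer of $\msc_o$ (type $II$) resp.\ of the splitting $\mfg_{-1}=\mfg_{-1}^{(1)}\oplus\mfg_{-1}^{(2)}$ (type $III$), which the paper supplies as Lemmas \ref{Lemma-contact} and \ref{Lemma1}. The only cosmetic difference is that the paper obtains the embedding $\mfg\hookrightarrow\mca(U)$ from the Tanaka machinery itself (the map $\psi_{x_0}$ built by solving the system (\ref{differential eq})) rather than from the fundamental vector fields of the $G$-action, and establishes surjectivity directly (every $X$ is determined by $X(o)\in\mfg(\mfm)$) instead of by a dimension count.
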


  We note that since $\Psi:O_1\rightarrow O_2$ preserves $D$, $\msc,$ or the decomposition $D=D^{(1)}\oplus D^{(2)}$ for types $I, II$  or $III,$ respectively, the differential $d\Psi$ sends  $\mathcal{A}(O_1)$,
 $\mca_\msc(O_1)$ or $\mca_\oplus(O_1)$ isomorphically onto $\mathcal{A}(O_2)$,
 $\mca_\msc(O_2)$ or $\mca_\oplus(O_2)$ for types $I, II$  or $III,$ respectively, i.e., $d\Psi$ send $\mfg$ to $\mfg$ for all types $I, II$ and $III.$
 Furthermore, it turns out that  $d\Psi$ sends  a parabolic subalgebra $\mfp$ onto another parabolic subalgebra $\mfp^\prime$. Therefore we have an isomorphism $d\Psi:(\mfg,\mfp)\rightarrow (\mfg,\mfp^\prime)$, which induces an automorphism $\widetilde{\Psi}:G/P\rightarrow G/P^\prime$, extending $\Psi:O_1\rightarrow O_2.$ This is a content of Proposition \ref{coro1}.

As a final step, we show that the biholomorphic map $\widetilde{\Psi}$ is equivariant with respect to the actions $A_i$, which implies that $\widetilde{\Psi}$ is an isomorphism between two EC-structures $A_i$ on $G/P.$

{\it \textbf{Acknowledgements}.}  The author was
supported by National Researcher Program 2010-0020413 of NRF.  The author would like to thank A. Ottazzi for answering some questions and B. Fu for useful discussions and suggestions. He especially thanks to J-M Hwang for suggesting the problem, providing a decisive idea for solving the problem, and helping him understand the idea by much explanation.

\section{Simple graded Lie algebras and its associated homogeneous spaces}

\subsection{Gradations of simple Lie algebras}
In this subsection, we collect basics on complex simple Lie algebras. We refer to \cite{Ya} for more detailed accounts.

\smallskip
 A $gradation$ of a Lie algebra $\mfg$ is a direct sum $\mfg=\bigoplus_{k\in \Z}\mfg_k$ of subspaces of $\mfg$ such that $[\mfg_r,\mfg_s]\subset \mfg_{r+s}$.
 A graded Lie algebra is a Lie algebra equipped with a gradation $\sgla$.
 Throughout the paper, we will restrict ourselves to (complex) simple graded Lie algebra $\sgla$ satisfying an additional condition
 $$\mfg_k=[\mfg_{k+1},\mfg_{-1}]\hspace{0.08in}\textrm {for \hspace{0.05in}all}\hspace{0.08in} k< -1.$$  Such simple graded Lie algebras can be characterized as follows.

Fix a Cartan subalgebra $\mfh \subset \mfg$ and a simple root system $\triangle:=\{\alpha_1,....\alpha_l\}$ of the root system $\Phi$ with respect to $\mfh$, and denote  by $\Phi^+$ the set of positive roots.  Let $\triangle_1$ be a nonempty subset of $\triangle$. For each $k\geq 0$, let $\Phi_k^+$ be the set of positive roots $\alpha\in \Phi^+$ that can be written as $\alpha=\sum_{i=1}^l n_i(\alpha)\alpha_i$ for $n_i(\alpha)$ with $\sum_{\alpha_i\in \triangle_1}n_i(\alpha)=k.$ Then we put $$\mfg_0= \mfh\oplus \bigoplus_{\alpha \in \Phi_0^+}(\mfg_\alpha\oplus \mfg_{-\alpha}),$$
$$\mfg_k=\bigoplus_{\alpha \in \Phi_k^+} \mfg_\alpha, \hspace{0.1in}\mfg_{-k}=\bigoplus_{\alpha \in \Phi_k^+} \mfg_{-\alpha} \hspace{0.15in}(k>0).$$ Then it is easy to check that  the direct sum $\sgla$ forms a gradation of $\mfg$ and satisfies the condition
 $\mfg_k=[\mfg_{k+1},\mfg_{-1}]$ for all $k<-1.$
 When $\mfg$ is of Lie type $X_l$, we often write  $(X_l,\triangle_1)$ for the simple graded Lie algebra $\mfg=\bigoplus_{k\in \Z}\mfg_k$ obtained in this way.
The converse is true by Theorem $3.12$ of \cite{Ya}. Namely, if $\mfg=\bigoplus_{k\in \Z}\mfg_k$  is a gradation of  a simple Lie algebra $\mfg$ satisfying $\mfg_k=[\mfg_{k+1},\mfg_{-1}]$ for each $k<-1$ and $\mfg $ is of Lie type $X_l,$ then it is conjugate to $(X_l,\triangle_1)$ for a subset $\triangle_1$ of the simple root system $\triangle$ of $X_l$ .
\smallskip

A gradation $\mfg=\bigoplus_{k\in \Z}\mfg_k$
is symmetric in the sense that $\mfg_{k}\ne 0$ if and only if $\mfg_{-k}\ne 0.$
More precisely, the dimension of $\mfg_{k}$ is equal to the dimension of $\mfg_{-k}$.
This follows from the fact that the restriction of Killing form $B$ to $\mfg_k\times \mfg_{-k}$ is nondegenerate (Lemma 3.1 of \cite{Ya}). Therefore for a simple graded Lie algebra $\mfg$, there is a unique integer $\mu >0$ such that $\mfg_k\ne 0$ for any $-\mu\leq k\leq \mu,$ and
$\mfg=\bigoplus_{k=-\mu}^{\mu}\mfg_{k}.$ Such an integer $\mu$ is called the $depth$ of $\mfg=\bigoplus_{k\in \Z}\mfg_k$.

\smallskip
Let $\mfg=\bigoplus_{k\in \Z}\mfg_k$ be a simple graded Lie algebra, and let $\mfg^i=\bigoplus_{k\geq i}\mfg_k$.
Then subalgebra $\mfg^0$ is  a parabolic subalgebra $\mfp$ associated with $\triangle_1,$ and  the subalgebra $\mfg^{1}$ is the nilradical $\mfn$ of $\mfp.$ Let $G$ be the adjoint algebraic group of $\mfg$, and let $P$ and $N$ be  subgroups of $G$ whose Lie algebras are  $\mfp$ and $\mfn,$ respectively.
Then $P$ is a parabolic subgroup of $G$ and the nilpotent subgroup $N$ is a unipotent radical of $P$. In this situation,  we say that $G/P$ is  {\it associated with} the graded simple Lie algebra $\mfg=\bigoplus_{k\in \Z}\mfg_k$.

\subsection{Examples of simple graded Lie algebras}
We give an explicit description of gradations of some simple graded Lie algebras which will be used in later sections.
\subsubsection{$A_l$ type ($l\geq 1$)}
For $\mfg=\mathfrak{sl}(l+1)$, we choose Cartan subalgebra $\mathfrak{h}$ consisting of all diagonal matrices $D(a_1,...,a_{l+1})$ in $\mathfrak{sl}(l+1)$, where $D(a_1,...,a_{l+1})$ is the diagonal matrix with the entries $a_1,...,a_{l+1}$ on the diagonal. For $i=1,...,l+1,$ let $\lambda_i$ be linear forms defined by $\lambda_{i}:D(a_1,...,a_{l+1})\mapsto a_i.$ Let $E_{i,j}$ be the $(l+1)\times (l+1)$ matrix whose $(i,j)$-th entry is $1$ and  other entries are all $0.$
Then we have $$[H,E_{i,j}]=(\lambda_i-\lambda_j)(H)(E_{i,j})\hspace{0.12in \textrm{for}\hspace{0.05in} H\in \mathfrak{h}}.$$
Thus, we have the root system  $\Phi=\{\lambda_i-\lambda_j\in \mathfrak{h}^* \hspace{0.03in}|\hspace{0.03in}1\leq i, j\leq l+1, i\ne j\}.$  Take a simple root system $\triangle =\{\alpha_1,...,\alpha_l\}$, where $\alpha_i=\lambda_i-\lambda_{i+1}$. For  $(A_{1},\{\alpha_k\}),$ the gradation of  $(A_{1},\{\alpha_k\})$ is given by $\mathfrak{sl}(l+1)=\mfg_{-1}\oplus\mfg_0\oplus\mfg_1$; \begin{displaymath} \mfg_{-1}=\left\{ \left(\begin{array}{cc}
0 & 0\\
C& 0\\
\end{array}\right)\hspace{0.04in}|\hspace{0.04in} C\in M(j,k)\right \},
\hspace{0.13in} \mfg_{1}=\left\{ \left(\begin{array}{cc}
0 & D\\
0 & 0\\
\end{array}\right)\hspace{0.04in} |\hspace{0.04in} C\in M(k,j)\right\},
\end{displaymath}

\begin{displaymath}
 \mfg_{0}=\left\{ \left(\begin{array}{cc}
A & 0\\
0 & B\\
\end{array}\right)\hspace{0.04in} |\hspace{0.04in} A\in M(k,k),\hspace{0.07in} B\in M(j,j) \hspace{0.07in}\textrm{and}\hspace{0.07in} \textrm{tr} A +\textrm{tr} B=0\right\},
\end{displaymath}
where $j=l+1-k$ and $M(p,q)$ denotes the set of $p\times q$ matrices.
This gradation can be described schematically by the diagram;
\begin{center}
\tiny{\begin{tikzpicture}[scale=0.9]

   \draw (0,0) rectangle (2,2);
   \draw [very thin](0,1.4)-- (2,1.4);
   \draw [very thin](0.6,0)-- (0.6,2);

   \draw (0.3,1.7) node{$0$};
   \draw (0.3,0.7) node{$-1$};
   \draw (1.3,1.7) node{$1$};
   \draw (1.3,0.7) node{$0$};

    \draw (0.3,2.2) node{$k$};
    \draw (1.3,2.2) node{$j$};
    \draw (-0.2,1.7) node{$k$};
     \draw (-0.2,0.7) node{$j$};

     \draw (3.7, 1.1) node{$(j=l+1-k)$};
    \end{tikzpicture}}
\end{center}

The diagram of $(A_{1},\{\alpha_{k_1}, \alpha_{k_2}\})$ is obtained by superposing two diagrams corresponding to $(A_{1},\{\alpha_{k_1}\})$ and $(A_{1},\{\alpha_{k_2}\});$
\begin{center}
\tiny{\begin{tikzpicture}[scale=0.9]

   \draw (0,0) rectangle (2.4,2.4);
   \draw [very thin](0,1.8)-- (2.4,1.8);
   \draw [very thin](0.6,0)-- (0.6,2.4);

   \draw (0.3,2.1) node{$0$};
   \draw (0.3,0.9) node{$-1$};
   \draw (1.5,2.1) node{$1$};
   \draw (1.5,0.9) node{$0$};

   \draw (3.4,0) rectangle (5.8,2.4);
   \draw [very thin](3.4,0.6)-- (5.8,0.6);
   \draw [very thin](5.2,0)-- (5.2,2.4);

   \draw (0.3,2.1) node{$0$};
   \draw (0.3,0.9) node{$-1$};
   \draw (1.5,2.1) node{$1$};
   \draw (1.5,0.9) node{$0$};

   \draw (4.3,1.5) node{$0$};
   \draw (4.3,0.3) node{$-1$};
   \draw (5.5,1.5) node{$1$};
   \draw (5.5,0.3) node{$0$};

  \draw (7.8,0) rectangle (10.2,2.4);
   \draw [very thin](8.4,0)-- (8.4,2.4);
   \draw [very thin](7.8,1.8)-- (10.2,1.8);
\draw [very thin](7.8,0.6)-- (10.2,0.6);
   \draw [very thin](9.6,0)-- (9.6,2.4);

 \draw (8.1,2.1) node{$0$};
   \draw (8.1,1.2) node{$-1$};
   \draw (8.1,0.3) node{$-2$};
   \draw (9,2.1) node{$1$};
    \draw (9,1.2) node{$0$};
   \draw (9.1,0.3) node{$-1$};
   \draw (9.9,2.1) node{$2$};
   \draw (9.9,1.2) node{$1$};
   \draw (9.9,0.3) node{$0$};

   \draw (2.9, 1.2) node{+};
    \draw (7, 1.2) node{=};

    \draw (1.2,-0.5) node{$(A_l,\{\alpha_{k_1}\})$};
   \draw (4.6,-0.5) node{$(A_l,\{\alpha_{k_2}\})$};
   \draw (9,-0.5) node {$(A_l,\{\alpha_{k_1}, \alpha_{k_2}\})$};
    \end{tikzpicture}}
\end{center}
In general, the diagram of $(A_{1},\{\alpha_{k_1},..., \alpha_{k_m}\})$ is obtained
by superposing $m$ diagrams of $(A_{1},\{\alpha_{k_1}\}),..., (A_{1},\{\alpha_{k_m}\})$.
\subsubsection{$C_l$ type ($l\geq2$)}
Let $(V,\langle,\rangle)$ be a symplectic vector space of dimension $2l$. Let $\mfg=\mathfrak{sp}(V).$ Choose a symplectic basis $\{e_1, ...,e_l,f_1,...,f_l\}$ of $V$ such that $\langle e_i,e_j\rangle=\langle f_i,f_j\rangle=0$ and $\langle  f_i, e_{l+1-j}\rangle=\delta_{i,j}$ for $i,j=1,...,l.$ Then using this basis, elements of $\mfg$ can be written as $2l\times 2l$-matrices of the form
\begin{displaymath}X=X(A,B,C)=\left(\begin{array}{cc}
A & B\\
C& -A^\prime\\
\end{array}\right).
\end{displaymath}
 Here  $A,B,C$ are $l\times l$ matrices, and $B,C$ are matrices such that $B=B^\prime$ and $C=C^\prime$, where  $()^\prime$ means taking ``transpose" with respect to the anti-diagonal line.

 In this case, the root systems is $\Phi=\{\lambda_i-\lambda_j \hspace{0.05in}(i\ne j), \hspace{0.08in}\pm(\lambda_i+\lambda_j)\hspace{0.05in} (1\leq i\leq j\leq l)\}$. Note that the roots $\lambda_{i}-\lambda_j$, $\lambda_i+\lambda_j$ and $-\lambda_i-\lambda_j$ have the root vectors $X(E_{i,j},0,0)$, $X(0,F_{l+1-j},0)$ and $X(0,0,F_{l+1-i,j})$, respectively, where $F_{i,j}:=E_{i,j}+E_{i,j}^\prime.$ By putting
 $\alpha_i=\lambda_i-\lambda_j$ for $1\leq i\leq l-1$ and $\alpha_l=2\lambda_{l},$ we take a simple root system $\triangle=\{\alpha_i \hspace{0.04in}|\hspace{0.04in} i=1,...,l\}.$

Then we see that the gradation of $(C_1,\{\alpha_k\})$ is given by the following  diagrams;
\begin{center}
\tiny{\begin{tikzpicture}[scale=0.9]

   \draw (0,0) rectangle (2.4,2.4);
   \draw [very thin](0,1.8)-- (2.4,1.8);
   \draw [very thin](0.6,0)-- (0.6,2.4);
   \draw[very thin] (0,0.6)-- (2.4,0.6);
    \draw[very thin] (1.8,0)-- (1.8,2.4);

   \draw (0.3,2.1) node{$0$};
   \draw (0.3,1.2) node{$-1$};
   \draw (0.3,0.3) node{$-2$};
   \draw (1.2,2.1) node{$1$};
   \draw (1.2,1.2) node{$0$};
   \draw (1.2,0.3) node{$-1$};
   \draw (2.1,2.1) node{$2$};
   \draw (2.1,1.2) node{$1$};
   \draw (2.1,0.3) node{$0$};

    \draw (-0.2,2.1) node{$k$};
    \draw (0.3,2.6) node{$k$};
    \draw (2.1,2.6) node{$k$};
    \draw (-0.2,0.3) node{$k$};

   \draw (3.9,1.2) node{$(1\leq k<l)$};

    \draw (6.3,0) rectangle (8.7,2.4);
   \draw [very thin](7.5,0)-- (7.5, 2.4);
   \draw [very thin](6.3,1.2)-- (8.7,1.2);

   \draw (6.9,1.8) node{$0$};
   \draw (6.9,0.6) node{$-1$};
   \draw (8.1,1.8) node{$1$};
   \draw (8.1,0.6) node{$0$};
   \draw (9.6,1.2) node{$(k=l)$};

    \end{tikzpicture}}
\end{center}
As in Lie type $A_l,$ the diagram of $(C_{1},\{\alpha_{k_1},..., \alpha_{k_m}\})$ is obtained
by superposing $m$ diagrams of $(C_{1},\{\alpha_{k_1}\}),..., (C_{1},\{\alpha_{k_m}\})$.
\subsection{Automorphism group of $G/P$}\label{subsection-auto}
Let $G/P$ be the homogeneous space associated with  $\mfg=\bigoplus_{k\in \Z}\mfg_k$.
Then it is well-known that $G$ (of adjoint type) coincides with the identity component $\mathrm{Aut}^0(G/P)$ of the group of automorphisms of $G/P$ except when  $\mfg=\bigoplus_{k\in \Z}\mfg_k$ is isomorphic with $(C_l, \{\alpha_1\})$ ($l\geq 2$), $(B_l, \{\alpha_l\})$ ($l\geq 3$) or $(G_2, \{\alpha_1\})$; see \cite{On}.
In these exceptions, the Lie algebra of the algebraic group  $\mathrm{Aut}^0(G/P)$ is of type $A_{2l-1}$, $D_{l+1}$ or $B_3$, respectively. Let $\widetilde{G}=\mathrm{Aut}^0(G/P).$ Then there is a parabolic subgroup $\widetilde{P}\subset \widetilde{G}$ such that $G/P=\widetilde{G}/\widetilde{P}$. Indeed, the simple graded Lie algebra with which $\widetilde{G}/\widetilde{P}$ is associated is $(A_{2l-1},\{\alpha_1\})$, $(D_{l+1},\{\alpha_{l+1}\})$ or $(B_3,\{\alpha_1\})$, respectively.

\section{Differential systems}\label{section:Differential system}
In this section, we give an overview on  differential systems on complex manifolds $M$.
\subsection{Generalities on differential systems}\label{subsection:differential systems}
Materials in this subsection are taken from \cite{Ya}. For more detailed accounts or proofs, also see \cite{T2}.
 A  {\it differential system} on $M$ is a  subbundle $D$ of the tangent bundle $TM$ on a complex manifold $M$. We will simply write $(M,D)$ for a differential system $D$ on $M$. A differential system $(M,D)$ is completely integrable if $[\mcD,\mcD]\subset \mcD$, where $\mcD$ is the sheaf of sections of $D.$ In this paper, we only deal with non-integrable differential system $(M,D)$ which we simply refer to as  a differential system.
A differential system  $(M,D)$ gives rise to the $k$-th weak derived system $\mcD^k$ which  is a subsheaf of sections of $TM$  defined inductively as follows.  Define $$\mcD^{-1}=\mcD,\hspace{0.08in} \mcD^{k}=\mcD^{k+1}+[\mcD,\mcD^{k+1}]\hspace{0.1in}\textrm{for}\hspace{0.08in}k<-1.$$ In general, $\mcD^k$ is not locally free. But if $\mcD^k$ is locally free for each $k\leq -1$, then the differential system $D$ is called {\it regular}, and we denote by $D^k$ the bundle corresponding to $\mcD^k$. We often call $D^k$ the {\it $k$-th weak derived system of $D$} for $k\leq -1$, too.

\begin{proposition}\label{pro:regular-Differential}(Proposition $1.1$ of \cite{T2}) Let $(M,D)$ be a regular differential system. Then  the weak derived system of $D$ satisfies
 \begin{enumerate}
 \item There exists a unique integer $\mu>0$ such that

$$ TM= D^{-\mu}\supset
D^{-\mu+1}\supset \cdots \supset D^{-1}=D, \hspace{0.08in}\textrm{and}\hspace{0.08in} TM \ne D^{-\mu+1}. $$
\item
$[\mcD^k, \mcD^l]\subset \mcD^{k+l}$ for all $k,l<0.$
\end{enumerate}
\end{proposition}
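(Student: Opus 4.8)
The plan is to prove the two assertions in the order (2) then (1): the bracket-compatibility is the technical engine, and the filtration statement follows from it once regularity is brought in. Before anything else I would record the elementary monotonicity of the flag: directly from the defining recursion $\mcD^{k}=\mcD^{k+1}+[\mcD,\mcD^{k+1}]$ one reads off $\mcD^{k+1}\subseteq\mcD^{k}$ for every $k<-1$, so that $\mcD=\mcD^{-1}\subseteq\mcD^{-2}\subseteq\cdots$, i.e. $\mcD^{m}\subseteq\mcD^{n}$ whenever $-1\geq m\geq n$. This monotonicity is used at every step below.

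For part (2), I would prove $[\mcD^{k},\mcD^{l}]\subseteq\mcD^{k+l}$ by induction on $-k\geq 1$. The base step $k=-1$ is exactly the defining relation $[\mcD,\mcD^{l}]\subseteq\mcD^{l-1}$, valid for all $l\leq -1$. For the inductive step I would write $\mcD^{k}=\mcD^{k+1}+[\mcD,\mcD^{k+1}]$ and expand, so that $[\mcD^{k},\mcD^{l}]\subseteq[\mcD^{k+1},\mcD^{l}]+[[\mcD,\mcD^{k+1}],\mcD^{l}]$. The first summand lands in $\mcD^{k+1+l}\subseteq\mcD^{k+l}$ by the induction hypothesis and monotonicity; the second is rewritten by the Jacobi identity inside $[\mcD,[\mcD^{k+1},\mcD^{l}]]+[\mcD^{k+1},[\mcD,\mcD^{l}]]$, and each of these again lands in $\mcD^{k+l}$ upon applying the induction hypothesis to the inner bracket followed by the base relation (or vice versa). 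The one point that genuinely demands care is that these are brackets of sheaves of $\mathcal{O}_{M}$-modules and are only $\C$-bilinear: distributing a bracket over a combination $\sum f_{i}[X_{i},Y_{i}]$ produces Leibniz terms $(Zf_{i})[X_{i},Y_{i}]$, but these lie in $[\mcD,\mcD^{k+1}]\subseteq\mcD^{k}\subseteq\mcD^{k+l}$, so they do no harm.

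For part (1), I would package (2) as follows. The subsheaf $\bigcup_{k}\mcD^{k}$ is closed under the Lie bracket by (2), contains $\mcD$, and is the smallest bracket-closed subsheaf containing $\mcD$ because each $\mcD^{k}$ is built from $\mcD$ by iterated bracketing; hence it is precisely the Lie hull of $\mcD$, which under the standing bracket-generating (non-holonomic) hypothesis on a differential system equals the full sheaf of vector fields on $M$. Now regularity enters: each $\mcD^{k}$ is locally free, hence corresponds to a subbundle $D^{k}$ of well-defined rank, and by monotonicity these ranks are non-decreasing in $-k$ and bounded by $\dim M$, so the chain stabilizes. The recursion then shows stabilization propagates — if $\mcD^{-\mu}=\mcD^{-\mu-1}$ then $\mcD^{k}=\mcD^{-\mu}$ for all $k\leq-\mu$ — so $\bigcup_{k}\mcD^{k}=\mcD^{-\mu}$, which combined with the previous sentence gives $D^{-\mu}=TM$. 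The same propagation principle forces every inclusion below $TM$ to be strict, so choosing $\mu$ minimal with $D^{-\mu}=TM$ (equivalently $D^{-\mu+1}\neq TM$) yields simultaneously existence, the strict filtration $TM=D^{-\mu}\supset\cdots\supset D^{-1}=D$, and uniqueness of $\mu$.

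I expect the main obstacle to be conceptual rather than computational. By (2) the stabilized bundle satisfies $[\mcD^{-\mu},\mcD^{-\mu}]\subseteq\mcD^{-2\mu}=\mcD^{-\mu}$ and is therefore involutive, so purely formal reasoning only shows the flag stabilizes at some integrable subbundle; the equality $D^{-\mu}=TM$ really requires the bracket-generating property built into the notion of differential system used here. Thus the two places I would be most careful are isolating exactly where non-holonomicity is invoked, and keeping the $\mathcal{O}_{M}$-linear Leibniz bookkeeping honest throughout the induction of part (2).
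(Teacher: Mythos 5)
The paper offers no proof of this proposition to compare against: it is quoted directly from Tanaka \cite{T2} (Proposition $1.1$ there) and used as a black box. Judged on its own terms, your argument for (2) is correct and is the standard one: induction on $-k$, splitting $\mcD^{k}=\mcD^{k+1}+[\mcD,\mcD^{k+1}]$, applying the Jacobi identity to the second summand, and absorbing the Leibniz terms $(Zf_i)[X_i,Y_i]$ into $[\mcD,\mcD^{k+1}]\subseteq\mcD^{k}\subseteq\mcD^{k+l}$ via monotonicity --- that last bookkeeping point is exactly the one worth being explicit about, and you handle it properly. Your derivation of (1) from (2) --- ranks are bounded and non-decreasing, hence the flag stabilizes; an equality anywhere in the flag propagates downward by the recursion; so taking $\mu$ minimal with $D^{-\mu}=TM$ gives strictness, and $TM\ne D^{-\mu+1}$ pins $\mu$ down uniquely --- is also sound.

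The one point to flag is the hypothesis you yourself isolate. The equality $D^{-\mu}=TM$ does not follow from the definitions as literally stated in this paper: a ``differential system'' here is only required to be a non-integrable subbundle, and ``regular'' only adds local freeness of the $\mcD^{k}$. A non-integrable but not bracket-generating $D$ (for instance a contact distribution on a factor of a product manifold, extended by zero to the other factor) is regular in this sense, yet its derived flag stabilizes at a proper involutive subbundle, so assertion (1) fails for it. You correctly identify bracket-generation as the property actually needed and locate precisely where it enters; in Tanaka's and Yamaguchi's framework this is built into the notion of a regular differential system of type $\mathfrak{m}$ with $\mathfrak{m}$ fundamental (and it does hold for the homogeneous systems $D=G\times_P(\mfg^{-1}/\mfg^0)$ the paper actually uses, since $\mfg_k=[\mfg_{k+1},\mfg_{-1}]$ for $k<-1$). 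So your proof establishes the proposition under that implicit additional hypothesis rather than under the paper's literal definitions --- which is the honest reading of the statement, but deserves to be said out loud rather than attributed to a ``standing hypothesis'' the paper never states.
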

To a regular differential system $(M,D),$  we associate a nilpotent graded Lie algebra $\mfm(x)$ for each $x\in M$  defined as follows. Put $\mfg_{-1}(x)=D^{-1}(x)$ and $\mfg_k(x)=D^k(x)/D^{k+1}(x)$ for $k<-1.$ Then define
\begin{displaymath}\mfm(x)=\bigoplus_{k=-1}^{-\mu}\mfg_k(x).\end{displaymath}
 The Lie bracket defined on local vector fields around $x$ induces a bracket product on the graded vector space $\mfm(x)$ which gives a graded Lie algebra structure on $\mfm(x)$ and satisfies $\mfg_k(x)=[\mfg_{k+1}(x),\mfg_{-1}(x))]$ for $k<-1$. Note that $\mfm(x)$ depends on the chosen $D$, and  $\mfm(x)$ is nilpotent by  Proposition \ref{pro:regular-Differential}.
The nilpotent graded Lie algebra $\mfm(x)$ is called the {\it symbol algebra of $D$ at $x$}. Let $\mfm$ be a fundamental graded Lie algebra of $\mu$-th kind, i.e.,
$\mfm=\bigoplus_{k=-1}^{-\mu}$ be a nilpotent graded Lie algebra such that  $\mfg_k=[\mfg_{k+1},\mfg_{-1}]$ for $k<-1.$
 Then the differential system $D$ on $M$ is called {\it of type} $\mfm$ if the symbol algebra $\mfm(x)$ is isomorphic with $\mfm$ for each $x\in M.$

\subsection{Natural regular differential system on homogeneous spaces} Let $\sgla$ be a simple graded Lie algebra, and $G/P$ its associated homogeneous space.
Then the tangent bundle $T(G/P)$ is identified with the associated bundle $G\times_P(\mfg/\mfg^0).$ Under this identification,  $D:=G\times_P(\mfg^{-1}/\mfg^0)$ forms a regular differential system on $G/P,$ and the weak derived system $D^k$ for $k\leq -1$ induced by $D$ is identified with $G\times_P(\mfg^k/\mfg^0)$. Another way of constructing $D$ is as follows. First, identify $\mfg$ with the Lie algebra of left invariant vector fields on $G$. Then the subspace $\mfg^{-1} \subset \mfg$ defines a left invariant subbundle of the tangent bundle $TG$. Note that this subbundle is preserved under the right action of $P$ on $G.$ Therefore $\mfg^{-1}$ induces a $G$-invariant differential system $D$ on $G/P.$
The following is a standard fact about a differential system on $G/P$ associated with $\mfg=\bigoplus_{k\in \Z}\mfg_k$.
\begin{proposition}(\cite{CN})\label{Natural}
 Let $\mfg=\bigoplus_{k=-\mu}^{\mu}\mfg_k$ be a simple graded algebra such that $\mfm=\bigoplus_{k=-1}^{-\mu}\mfg_k$ is fundamental.
Then the associated homogeneous space $G/P$  carries a natural regular differential system $D\subset T(G/P)$ of type $\mfm$.
\end{proposition}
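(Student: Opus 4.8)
The plan is to reduce everything to a computation on the open dense \emph{big cell} and then to the Lie algebra $\mfm$ itself, exploiting the $G$-invariance of $D$. Since $G$ acts transitively on $G/P$ and $D$ is $G$-invariant, each weak derived system $\mcD^k$ is $G$-invariant as well, so it suffices to verify regularity and to compute the symbol algebra at the base point $o=eP$. First I would introduce the opposite unipotent group $N^-=\exp(\mfm)$, whose Lie algebra is $\mfm=\bigoplus_{k<0}\mfg_k$, and use that the orbit map $n\mapsto n\cdot o$ is an open immersion of $N^-$ onto the big cell $\Omega=N^-\cdot o\subset G/P$, with $d(n\mapsto no)|_e\colon \mfm \to T_o(G/P)=\mfg/\mfg^0$ the canonical isomorphism. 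Under this chart the left action of $N^-$ on $\Omega$ becomes left translation on $N^-$, so the $G$-invariant distribution $D$ restricts to the \emph{left-invariant} distribution on $N^-$ generated by the subspace $D_o=\mfg^{-1}/\mfg^0=\mfg_{-1}\subset \mfm$.

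Having made this reduction, I would compute the weak derived systems purely at the Lie-algebra level. On a Lie group the bracket of left-invariant vector fields is again left-invariant and equals the field attached to the bracket in the Lie algebra; consequently, for a left-invariant distribution generated by a subspace $V\subset\mfm$, the term $[\mcD,\mcD^{k+1}]$ is, modulo the module generated by lower-order terms, generated by $[\mfg_{-1},V^{(k+1)}]$, where $V^{(k)}\subset\mfm$ is the subspace generating $\mcD^k$. Thus the recursion $\mcD^{k}=\mcD^{k+1}+[\mcD,\mcD^{k+1}]$ becomes $V^{(-1)}=\mfg_{-1}$ and $V^{(k)}=V^{(k+1)}+[\mfg_{-1},V^{(k+1)}]$. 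I would then prove by descending induction on $k$ that $V^{(k)}=\bigoplus_{j=k}^{-1}\mfg_j$: the inductive step produces $[\mfg_{-1},\mfg_{k+1}]$ as the only genuinely new graded piece, and the hypothesis that $\mfm$ is fundamental, i.e. $\mfg_k=[\mfg_{k+1},\mfg_{-1}]$ for $k<-1$, guarantees exactly $[\mfg_{-1},\mfg_{k+1}]=\mfg_k$. This shows each $\mcD^k$ is locally free of constant rank (so $D$ is regular), that $D^{-\mu}=T(G/P)$ while $D^{-\mu+1}\ne T(G/P)$, matching Proposition \ref{pro:regular-Differential}, and, transporting back by $G$-invariance, that $D^k$ is identified with $G\times_P(\mfg^k/\mfg^0)$.

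Finally I would identify the symbol algebra at $o$. By construction $\mfg_k(o)=D^k_o/D^{k+1}_o=V^{(k)}/V^{(k+1)}\cong \mfg_k$ for $-\mu\le k\le -1$, and the Lie bracket of vector fields inducing the symbol bracket coincides, on left-invariant fields, with the bracket of $\mfm$; hence $\mfm(o)=\bigoplus_{k=-1}^{-\mu}\mfg_k(o)\cong\mfm$ as graded Lie algebras. Homogeneity then gives $\mfm(x)\cong\mfm$ for every $x$, so $D$ is of type $\mfm$. The main obstacle is the bracket bookkeeping in the middle step: one must check that passing from the ambient $G$-invariant distribution on $G/P$ to the left-invariant model on $N^-$ is faithful for \emph{all} derived systems, and that in the recursion the brackets $[\mfg_{-1},V^{(k+1)}]$ contribute no spurious directions outside $\bigoplus_{j\ge k}\mfg_j$ and---crucially via the fundamental condition---do fill the whole of $\mfg_k$. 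Once this is in place, regularity, the flag of Proposition \ref{pro:regular-Differential}, and the symbol computation all follow uniformly.
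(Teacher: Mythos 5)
Your argument is correct and is essentially the same route the paper relies on: the proposition is stated there as a citation to \cite{CN} without proof, but the paper's subsection on the local picture of $D$ performs exactly your reduction, identifying $D|_{M(\mfm)}$ with the left-invariant distribution $D_\mfm$ on $N^-$ generated by $\mfg_{-1}$, after which regularity, the flag $D^{-\mu}=T(G/P)\supset\cdots\supset D^{-1}=D$, and the symbol computation follow from the fundamental condition $\mfg_k=[\mfg_{k+1},\mfg_{-1}]$ together with $G$-invariance, just as you describe. No gap to report.
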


\subsection{Local picture of the natural differential system $D$ on $G/P$}
  Let  $\mfg=\bigoplus_{k\in \Z}\mfg_k$ be a simple graded Lie algebra . We describe a local picture of the differential system $D$ on the associated homogeneous space $G/P$. We fix $S=G/P$ in case that there may be  a different presentation  $G/P^\prime$ for a different parabolic subgroup $P^\prime$ involved in some explanation.
 First of all, let us construct `principal'  dense open subsets of $S$ indexed by parabolic subgroups of $G$. Note that there is a principal dense open subset of $S$ associated with the given $\sgla$ (or its parabolic $P$). To be precise,  let $N^-$  a
 subgroup of $G$ with Lie algebra $\mfm=\bigoplus_{k<0}\mfg_k,$  and  $\iota_P:N^-\hookrightarrow G/P$ the restriction of the natural projection $\pi_P:G\rightarrow G/P$. Let  $M(\mfm)\subset S= G/P$ be the image of $\iota_P$. Then $M(\mfm)$ is a dense open subset of  $G/P$.

  To construct general principal open subsets of $S$, let $P^\prime$ be a parabolic subgroup of $G,$  so that $P^\prime=hPh^{-1}$ for some $h\in G.$
 The relation $P^\prime=hPh^{-1}$ induces an isomorphism $\tau_{P^\prime}:G/P^\prime\rightarrow S$ defined by $\tau_{P^\prime}(gP^\prime)=(h^{-1}gh)P$ for $gP^\prime \in G/P^\prime$, and  a new gradation $\mfg=\bigoplus_{k\in \Z}\mathfrak{g}^\prime_k,$
where  $\mathfrak{g}^\prime_k=h\mfg_kh^{-1}.$ Then $M(\mfm^\prime)$ constructed as above for the new gradation  $\mfg=\bigoplus_{k\in\Z}\mathfrak{g}^\prime_k$ is an open subset of $G/P^\prime$. By abuse of notation, let $M(\mfm^\prime)\subset S$ denote the image of $\tau_{P^\prime}:G/P^\prime\rightarrow S $.

 Now, given  $\sgla$, we  construct a model  differential system $D_\mfm$ on the manifold $N^{-}$ defined as follows (p. $420$ of \cite{Ya}).
 Identifying $\mfm$ with the Lie algebra of left invariant vector fields on $N^-$,
     $\mfg_{-1}$ defines a left invariant subbundle $D_\mfm$ of $T(N^-)$.  Then  $D_{\mfm}$ is a regular differential system on $N^-$ of type $\mfm.$
  Furthermore, we see that the differential system $D_{\mfm}$ on $N^-$ is identified with the differential system $ D|_{M(\mfm)}$ on $M(\mfm)\subset S$ via the isomorphism $\iota_P:N^{-}\rightarrow M(\mfm),$ where  $D$ is the natural differential system on $G/P$ of Proposition \ref{Natural} associated with $\mfg=\bigoplus_{k\in \Z}\mathfrak{g}_k.$

 More generally, let $P^\prime $ and $\mfg=\bigoplus_{k\in \Z}\mfg_k^\prime$ be given as above. Let  $N^{\prime -}:=(N^{\prime})^{-}$ be the subgroup of $G$ with  Lie algebra $\mfm^\prime=\bigoplus_{k<0}\mfg_k^\prime$.  As above,  for $\mfg=\bigoplus_{k\in \Z}\mfg_k^\prime$  (or $P^\prime$),  we can construct a model differential system on $D_{\mfm^\prime}$ on $N^{\prime -}$, which is identified with the differential system $D^{\prime}|_{M(\mfm^\prime)}$ on $M(\mfm^\prime)\subset G/P^\prime,$ where  $D^\prime$ is the natural differential system on $G/P^\prime$ associated with $\mfg=\bigoplus_{k\in \Z}\mfg_k^\prime$.
 But since the differential system  $D^\prime$ on $G/P^\prime$ is identified with  the differential system  $D$ on $S$ via $\tau_{P^\prime}:G/P^\prime\rightarrow S $, the differential system $D_{\mfm^\prime}$ on $N^{\prime -}$ is identified with the differential system  $D|_{M(\mfm^\prime)}$ on $M(\mfm^\prime) \subset S$ via the composition of maps $\tau_{P^\prime}\circ \iota_{P^\prime}$, where $\iota_{P^\prime}:N^{\prime -}\hookrightarrow G/P^\prime$ is the restriction of the natural projection $\pi_{P^\prime}:G\rightarrow G/P^\prime$ to $N^{\prime -}$.
 This construction shows how the differential systems on principal open subsets of $S$ look like.
 \section{Varieties of minimal rational tangents}\label{section:VMRT}
This section is devoted to giving an overview of a variety of minimal rational tangent of $M$. We refer to \cite{Hw2} for details on VMRT.
Throughout  this section, we will assume that $M$ is a Fano manifold of dimension $n$ and of Picard number $1$.
\subsection{Minimal rational curves} A {\it parametrized rational curve}, or simply a {\it rational curve} on $M$ is a morphism $f:\P^1\rightarrow M$ that is birational over its image. It is well-known that for each $x$ on a Fano manifold $M$ of Picard number $1$, there is  a rational curve   $f:\P^1\rightarrow M$ through $x$ (\cite{Mo1} and \cite{Ko1}).  Recall that a vector bundle on $\P^1$ is isomorphic to a direct sum of line bundles. A rational curve  $f:\P^1\rightarrow M$ is {\it free} if the pull-back of the tangent bundle  $TM$ of $M$ splits as
$$f^*TM\stackrel{\sim}{=}\mathcal{O}(a_1)\oplus \cdots \oplus \mathcal{O}(a_n)$$ for all nonnegative integers $a_i$. A free rational curve  $f:\P^1\rightarrow M$ is {\it minimal} if its anti-canonical degree, i.e., the degree of $f^*(K_X^{-1})$ is minimal. A {\it minimal rational curve} on $M$ is a rational curve that can be obtained as a deformation of minimal free rational curves. A {\it minimal rational component} is  a component of the Chow space of $M$ whose members are minimal rational curves. Note  that there are only finitely many minimal rational components (Section $1.3$ of \cite{Hw2}).
A free rational curve  $f:\P^1\rightarrow M$ is {\it standard} if \be \label{dimension p} f^*TM\stackrel{\sim}{=}\mathcal{O}(2)\oplus [\mathcal{O}(1)]^p\oplus \mathcal{O}^{n-1-p},\ee
where $p+2$ is the anti-canonical degree of $f.$ By the so-called bend-and-break of Mori, a generic member of a minimal rational component is standard (\cite{Ko1}).

\subsection{Variety of minimal rational tangents}\label{subsection:VMRT}
Choose a minimal rational component $\mathscr{K}$, and for a general element $x\in M,$ let $\msk_x$  be the normalization of the Chow space of members of $\msk$ through $x.$ Then it was proved in \cite{Mo1} that $\msk_x$ is the union of finitely many  smooth algebraic varieties of dimension $p$, where $p$ is in (\ref{dimension p}). Note that a generic member of each component of $\msk_x$ is a standard rational curve that is smooth at $x.$ Thus, by associating to a member of $\msk_x$ smooth at $x$  its tangent, we define a rational map called the {\it tangent map} at $x$ $$\Theta_x:\msk_x\rightarrow \P(T_xM).$$  Let $\msc_x$ be the strict image of $\Theta_x$. Then $\msc_x$ is called the {\it variety of minimal rational tangents}, VMRT for short, at $x.$  We will denote by $\msc$ the variety obtained by taking the union of $\msc_x.$ The following proposition makes it possible to compute the VMRT at a general point for various manifolds.

\begin{proposition}(Proposition of \cite{Hw2})\label{Pro:VMRT}
Suppose that $M$ can be embedded in a projective space $\P^N$ so that for each $x\in M,$ $M$ contains a line of $\P^N$ through $x.$ Then the tangent map at a general point $y\in M$ is an embedding, and $\msc_y$ is smooth.
\end{proposition}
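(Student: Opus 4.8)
The plan is to take the minimal rational component $\msk$ to be the family of lines of $M$ and to analyze the tangent map $\Theta_y$ for this family. First I would check that, under the hypothesis, the lines do form a minimal rational component: since $M$ has Picard number $1$, the anti-canonical degree of any rational curve is a fixed positive multiple of its degree in $\P^N$, so lines have the least possible anti-canonical degree and are therefore minimal, and the hypothesis that a line passes through every point makes them a covering (hence free) family. A general line is then standard and smooth at a general point $y$, so the fibre $\msk_y$ is smooth of dimension $p$ and its general member is a standard line $\ell$ smooth at $y$, for which $[\ell]\mapsto[T_y\ell]$ is well defined.

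The first key point is that $\Theta_y$ separates points. A line in $\P^N$ is determined by any one of its points together with its tangent direction there; hence two distinct lines of $M$ through $y$ have distinct tangent directions at $y$, so $\Theta_y$ is injective on the locus of members smooth at $y$.

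The second, and main, step is to show that $\Theta_y$ is an immersion at a general $[\ell]$, which I would carry out by a normal bundle computation. For a standard line $\ell$ smooth at $y$ one has $TM|_\ell\cong \mathcal{O}(2)\oplus\mathcal{O}(1)^p\oplus\mathcal{O}^{n-1-p}$ with $T\ell=\mathcal{O}(2)$, so the normal bundle is $N_{\ell/M}\cong\mathcal{O}(1)^p\oplus\mathcal{O}^{n-1-p}$. Deformations of $\ell$ inside $M$ fixing $y$ are governed by $H^0(\ell,N_{\ell/M}\otimes\mathcal{I}_y)=H^0(\ell,N_{\ell/M}(-1))$, and since $N_{\ell/M}(-1)\cong\mathcal{O}^p\oplus\mathcal{O}(-1)^{n-1-p}$ this space has dimension $p$ and vanishing $H^1$, confirming that $\msk_y$ is smooth of dimension $p$ with $T_{[\ell]}\msk_y=H^0(\ell,N_{\ell/M}(-1))$. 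Identifying the tangent space of $\P(T_yM)$ at $[T_y\ell]$ with $\mathrm{Hom}(T_y\ell,T_yM/T_y\ell)\cong (N_{\ell/M})_y$, the differential $d\Theta_y$ at $[\ell]$ becomes the evaluation map $H^0(\ell,N_{\ell/M}(-1))\to (N_{\ell/M})_y$ sending a first-order deformation to its value at $y$. On the trivial summand $\mathcal{O}^p$ this evaluation is an isomorphism, while $\mathcal{O}(-1)^{n-1-p}$ contributes no sections; hence $d\Theta_y$ is injective.

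Combining the two steps, $\Theta_y$ is an injective immersion on the dense open locus where the member is standard and smooth at $y$, hence a local embedding, and its image $\msc_y$ is smooth of dimension $p$ as the image of the smooth variety $\msk_y$ under an embedding. The step I expect to be the main obstacle is the identification of $d\Theta_y$ with the evaluation map on $H^0(\ell,N_{\ell/M}(-1))$: this requires unwinding the deformation-theoretic description of the tangent map and verifying that the infinitesimal variation of the tangent direction $[T_y\ell_t]$ is computed by the value at $y$ of the normal component of the deforming vector field, which is the technical heart of the argument.
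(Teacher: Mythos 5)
The paper offers no proof of this statement at all---it is quoted from \cite{Hw2}---and your argument is essentially the one given in that reference: injectivity of $\Theta_y$ because a line of $\P^N$ is determined by one of its points and its tangent direction there, and immersivity via the identification of $d\Theta_y$ at $[\ell]$ with the evaluation map $H^0(\ell,N_{\ell/M}(-1))\to (N_{\ell/M}(-1))_y$, injective because $N_{\ell/M}(-1)\cong\mathcal{O}^p\oplus\mathcal{O}(-1)^{n-1-p}$. The one loose end is your last sentence: you establish that $\Theta_y$ is an injective immersion only on the open locus of standard members, whereas $\msc_y$ is by definition the \emph{closure} of the image, which could a priori acquire singularities at boundary points coming from non-standard members. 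For lines this is easily repaired and should be said explicitly: through a very general point $y$ every line is free (non-free members of a covering family miss a very general point), and every free line is automatically standard, since $N_{\ell/M}$ is a subbundle of $N_{\ell/\P^N}\cong\mathcal{O}(1)^{N-1}$ and so has all summands of degree at most $1$; moreover every line through $y$ is trivially smooth at $y$. Hence $\Theta_y$ is defined, injective and immersive on all of the smooth proper variety $\msk_y$, is therefore a closed embedding, and $\msc_y=\Theta_y(\msk_y)$ is smooth. With that remark added, your proof is complete and agrees with the cited source.
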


\subsection{Examples}
We give examples of VMRT for some homogeneous spaces.
\subsubsection{Projective space} For  $\P^n,$ the Chow space $\msk_x$ of all lines through $x$ is isomorphic to $\P^{n-1}$ for every $x\in \P^n.$ From Proposition \ref{Pro:VMRT}, we have the isomorphism $$\Theta_x:\msk_x=\P^{n-1} \DistTo \P(T_x\P^n).$$ So we have $\msc_x=\P(T_x\P^n).$
\subsubsection{Grassmannian} Let $\mathbb{G}(m,N)$ be the Grassmannian of $m$-dimensional subspaces $W$ in a vector space $V$ of dimension $N$. Let $x$ be a point of $\mathbb{G}(m,N)$ corresponding to a subspace $W.$ Under the Pl\"{u}cker embedding, $\mathbb{G}(m,N)$ satisfies the hypothesis of Proposition \ref{Pro:VMRT}, and so $\Theta_x$ is an embedding. To find $\msk_x,$ we note that
a line $L$ on $\mathbb{G}(m,N)$ is completely determined by an $(m-1)$-dimensional subspace $W^\prime \subset W$ and an $(m+1)$-dimensional subspace $W^{''}\subset V$ such that $W\subset W^{''},$ and points on the line $L$ correspond to subspaces of $V$ containing $W^\prime$ and contained in $W^{''}$. But $(m-1)$-dimensional subspaces of $W$ and $(m+1)$-dimensional subspaces of $V$ containing $W$ are parametrized by $\P(W^*)$ and $\P(V/W)$, respectively. Thus, $\msk_x$ is isomorphic to $\P^{m-1}\times \P^{N-m-1}$. Then we can identify the VMRT $\msc_x$ at $x$ through the tangent map
$$\Theta_x:\msk_x=\P^{m-1}\times\P^{N-m-1}\hookrightarrow \P(T_xM)=\P(W^*\otimes V/W).$$ Note that  $\mathbb{G}(m,N)$ is the homgeneous space associated with the depth $1$ simple graded Lie algebra  $(A_{N-1},\{\alpha_m\})$.
\smallskip

For other examples, let us take simple graded Lie algebras $(B_l,\{\alpha_2\})$ ($l\geq 3$) and $(D_l,\{\alpha_2\})$  ($l\geq 4$). Then they have the contact gradation, and  the associated homogeneous spaces  $G/P$ have $\P^1\times Q^{2l-5}$ and $\P^1\times Q^{2l-6}$ as VMRTs, respectively, where $Q$ denotes the quadric.
Details on this can be found in  \cite{Hw1}.

\section{Prolongation of $\mfm$}
In this section, we study the prolongation introduced originally by Tanaka
which provide us with a main tool to prove our result. References are \cite{Ya} and \cite{T2}.

\subsection{Definition of the prolongation} \label{subsection:prolongation}

Let $\mfm=\bigoplus_{k<0} \mfg_k$ be a fundamental graded Lie algebra of $\mu$-th kind. We construct  a graded Lie algebra depending on $\mfm$ $$\mfg(\mfm)=\bigoplus_{k\in \Z}\mfg_k(\mfm)$$ as follows.
For $k<0,$ put $\mfg_{k}(\mfm)=\mfg_k.$
Let $\mfg_0(\mfm)$ be the Lie algebra of gradation preserving derivations on the graded Lie algebra $\mfm=\bigoplus_{k<0} \mfg_k$. For $u \in \mfg_0(\mfm)$ and $X\in \mfm$, define $[u,X]=-[X,u]=u(X).$
We easily see that this bracket satisfies the Jacobi identity on the direct sum $\bigoplus_{k\leq 0}\mfg_k(\mfm)$, and so $\bigoplus_{k\leq 0}\mfg_k(\mfm)$ is a graded Lie algebra.
 For $k>0,$ we proceed inductively. Suppose $\mfg_p(\mfm)$ are defined for all $p\leq k.$
Then let $\mfg_{k+1}(\mfm)$ be the vector space of degree $(k+1)$ linear maps $u:\mfm \rightarrow \bigoplus_{j\leq k}\mfg_j(\mfm)$ satisfying $$u([X,Y])=[u(X),Y]+[X,u(Y)] \hspace{0.1in}\mathrm{for}\hspace{0.05in} X,Y \in \mfm.$$
So we have a direct sum $\mfg(\mfm)=\bigoplus_{k\in \Z}\mfg_k(\mfm)$ of vector spaces.
Now we define a Lie bracket  on $\mfg(\mfm)=\bigoplus_{k\in \Z}\mfg_k(\mfm)$. For  $X\in \mfm$ and $u \in \mfg_r(\mfm)$ with $r\geq 0$, define $[u, X]=u(X).$
 For $u_1\in \mfg_r(\mfm)$ and $u_2 \in \mfg_s(\mfm),$ define a bracket  $[u_1,u_2]$ to be a degree $(r+s)$  linear map $[u_1,u_2]:\mfm\rightarrow \bigoplus_{ j\leq r+s-1}\mfg_j(\mfm)$ given by $$[u_1,u_2](X)=[u_1(X), u_2]+[u_1,u_2(X)].$$  We can easily check that this bracket satisfies the Jacobi identity, and so  $\mfg(\mfm)$ becomes a graded Lie algebra.  The graded Lie algebra $\mfg(\mfm)$ is called   the {\it  prolongation of $\mfm.$}  There is a more general notion of  prolongation: Let $\mfg_0$ be a subalgebra of the Lie algebra $\mfg_0(\mfm).$ For $k\geq 1,$ define a subspace of $\mfg_k(\mfm)$ inductively as
 $$\mfg_k=\{u\in \mfg_k(\mfm)\hspace{0.04
 in} | \hspace{0.07in}[u, \mfg_{-1}]\subset \mfg_{k-1}\}.$$
 Then $\mfg(\mfm,\mfg_0):=\mfm\oplus\bigoplus_{k\geq 0}\mfg_k$ is a graded subalgebra of $\mfg(\mfm)$.  $\mfg(\mfm,\mfg_0)$ is called the {\it prolongation of $(\mfm,\mfg_0).$}

The prolongation $\mfg(\mfm)$ satisfies the following property (\cite{T2}):
\be\label{property 3}
\mathrm{For}\hspace{0.05in}  k\geq 0, \hspace{0.05in}\mathrm{if}\hspace{0.05in}\hspace{0.05in}  X\in \mfg_k(\mfm)\hspace{0.05in} \mathrm{and}\hspace{0.05in} [X,\mfm]=0 \hspace{0.05in}, \hspace{0.08in}\mathrm{then} \hspace{0.05in}X=0.\ee

 The following result of Yamaguchi is essential in proving our result.
\begin{proposition}(Theorem $5.2$ of \cite{Ya})\label{prop1}
Let   $\mfg=\bigoplus_{k\in \Z}\mfg_k$   be a simple graded Lie algebra satisfying $\mfg_k=[\mfg_{k+1},\mfg_{-1}]$ for $k<-1$. Then $\mfg=\bigoplus_{k\in \Z}\mfg_k$ is the prolongation of $\mfm=\bigoplus_{k<0}\mfg_k$ except for the following three cases.
\begin{enumerate}
\item $\mfg=\bigoplus_{k\in \Z}\mfg_k$ is of  depth one, i.e., $\mfg=\mfg_{-1}\oplus \mfg_0\oplus \mfg_1.$
\item  $\mfg=\bigoplus_{k\in \Z}\mfg_k$ has a contact gradation, i.e., of depth two with $\mfg_{-2}$ having dimension one.
\item  $\mfg=\bigoplus_{k\in \Z}\mfg_k$ is isomorphic with $(A_l, \{\alpha_1,\alpha_k\})$ $(1< k<  l)$ or $(C_l, \{\alpha_1, \alpha_l\})$.
\end{enumerate}
Furthermore, $\mfg=\bigoplus_{k\in \Z}\mfg_k$ is the prolongation of $(\mfm,\mfg_0)$ except when  $\mfg=\bigoplus_{k\in \Z}\mfg_k$ is isomorphic with  $(A_l,\{\alpha_1\})$ or $(C_l,\{\alpha_1\})$.
\end{proposition}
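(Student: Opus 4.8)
\section*{Proof proposal}

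The plan is to realize $\mfg$ canonically as a graded subalgebra of the prolongation and then to measure, degree by degree, the failure of that inclusion to be an equality; the two ``except'' clauses will diverge precisely according to what happens in degree $0$. First I would construct the embedding $\mfg \hookrightarrow \mfg(\mfm)$. In negative degrees it is the identity, and in degree $k\ge 0$ it sends $X\in\mfg_k$ to the linear map $\mathrm{ad}(X)|_{\mfm}:\mfm\to\bigoplus_{j<k}\mfg_j$, which is a degree-$k$ derivation by the Jacobi identity and hence lies in $\mfg_k(\mfm)$. This map is injective: in degree $0$ because $\mfg_0$ acts faithfully on $\mfm$, and for $k\ge 1$ by a Killing-form computation. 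Indeed, if $X\in\mfg_k$ ($k\ge 1$) satisfies $[X,\mfg_{-1}]=0$, then any $Z\in\mfg_{-k}$ is, by the fundamentality hypothesis $\mfg_{-j}=[\mfg_{-j+1},\mfg_{-1}]$, an iterated bracket of elements of $\mfg_{-1}$; sliding one bracket onto $X$ by invariance of the Killing form $B$ gives $B(X,Z)=0$, and nondegeneracy of $B$ on $\mfg_k\times\mfg_{-k}$ forces $X=0$. Thus $\mfg$ sits inside $\mfg(\mfm)$ as a graded subalgebra agreeing with it in negative degrees, and a fortiori $\mfg\subseteq\mfg(\mfm,\mfg_0)$, where by definition the degree-$0$ part is already exactly $\mfg_0$. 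The proposition then amounts to deciding when these inclusions are equalities.

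Next I would isolate the degree-$0$ discrepancy, which governs the gap between the two statements. The space $\mfg_0(\mfm)$ consists of all gradation-preserving derivations of $\mfm$, whereas $\mfg_0$ is the image of $\mathrm{ad}$. When $\mfm$ is abelian (depth one) every endomorphism of $\mfg_{-1}$ is such a derivation, so $\mfg_0(\mfm)=\mathfrak{gl}(\mfg_{-1})\supsetneq\mfg_0$ unless $\mfg_0$ is already all of $\mathfrak{gl}(\mfg_{-1})$, which happens exactly for $(A_l,\{\alpha_1\})$. When $\mfm$ is Heisenberg (contact gradation) the gradation-preserving derivations form $\mathfrak{csp}(\mfg_{-1})$, strictly larger than $\mfg_0$ unless $\mfg_0=\mathfrak{csp}(\mfg_{-1})$, which happens exactly for $(C_l,\{\alpha_1\})$. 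Finally, for the multiply graded families $(A_l,\{\alpha_1,\alpha_k\})$ and $(C_l,\{\alpha_1,\alpha_l\})$ the independent scalings of the two $\mfg_0$-irreducible pieces of $\mfg_{-1}$ produce one extra toral derivation, so again $\mfg_0(\mfm)\supsetneq\mfg_0$. In every remaining case I would verify $\mfg_0(\mfm)=\mfg_0$. This already explains the long list in the $\mfg(\mfm)$ statement and why, once $\mfg_0$ is fixed, it collapses in degree $0$ to only the two projective cases.

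The heart of the argument, and the step I expect to be hardest, is to show that the positive prolongation does not grow beyond $\mfg_{>0}$ outside the exceptional cases. I would phrase the obstruction cohomologically: an element of $\mfg_{k+1}(\mfm)$ (resp.\ of the pair prolongation) not coming from $\mfg_{k+1}$ determines, via the faithfulness property that $X\in\mfg_{k}(\mfm)$ with $[X,\mfm]=0$ forces $X=0$, a nonzero class of positive grading degree in the generalized Spencer (Lie-algebra) cohomology $H^1(\mfm,\mfg)$. Tanaka's prolongation theory reduces the equality $\mfg(\mfm,\mfg_0)=\mfg$ to the vanishing of this cohomology in positive degree. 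I would then compute $H^1(\mfm,\mfg)$ by Kostant's theorem: as a $\mfg_0$-module it is a sum of irreducibles indexed by the length-one minimal coset representatives $s_\alpha$ with $\alpha\in\triangle_1$, and the grading degree of each summand is read off from the root datum. The bookkeeping shows that every summand has non-positive degree except precisely in the cases listed, where a single positive-degree class survives and supplies the extra prolongation.

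Assembling these pieces gives both assertions. Combined with the finite-type conclusion of Tanaka's theory (which fails only in the infinite-type projective cases), the cohomological vanishing yields $\mfg(\mfm,\mfg_0)=\mfg$ away from $(A_l,\{\alpha_1\})$ and $(C_l,\{\alpha_1\})$. For the full prolongation $\mfg(\mfm)$ one adds the degree-$0$ discrepancy analyzed above: wherever $\mfg_0(\mfm)\supsetneq\mfg_0$ the inclusion is already strict in degree $0$ (and the extra derivations prolong further), which enlarges the exceptional set to the depth-one, contact, and $(A_l,\{\alpha_1,\alpha_k\})$, $(C_l,\{\alpha_1,\alpha_l\})$ cases. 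The main obstacle is thus the Kostant-theoretic bookkeeping of the degrees in $H^1(\mfm,\mfg)$ together with the separate determination of $\mfg_0(\mfm)$, and matching both outputs against the two stated lists.
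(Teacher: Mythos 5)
The paper does not prove this proposition at all: it is imported verbatim as Theorem $5.2$ of \cite{Ya}, so there is no internal argument to compare against. Your outline does, however, follow the same route as Yamaguchi's actual proof in that reference: embed $\mfg$ into $\mfg(\mfm)$ via $\mathrm{ad}$ (your Killing-form injectivity argument for degree $k\geq 1$ is the standard one and is correct), reduce the non-growth of the positive prolongation to the vanishing of the positive-degree part of the generalized Spencer cohomology $H^{1}(\mfm,\mfg)$ by Tanaka's criterion, compute that cohomology as a $\mfg_0$-module by Kostant's theorem, and handle the degree-$0$ discrepancy $\mfg_0(\mfm)/\mfg_0$ separately to pass from the $(\mfm,\mfg_0)$-statement to the $\mfm$-statement. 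So the strategy is right and is essentially the cited proof.

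That said, what you have written is a plan rather than a proof: both decisive steps are asserted, not executed. The identification of which irreducible summands of $H^{1}(\mfm,\mfg)$ carry positive grading degree (the ``bookkeeping'' over minimal coset representatives attached to $\triangle_1$) and the case-by-case determination of $\mfg_0(\mfm)$ are precisely the content of Section $5$ of \cite{Ya}, and nothing in your sketch would let a reader reconstruct why the exceptional list is exactly $(A_l,\{\alpha_1\})$, $(C_l,\{\alpha_1\})$, depth one, contact, and $(A_l,\{\alpha_1,\alpha_k\})$, $(C_l,\{\alpha_1,\alpha_l\})$ and nothing else. One concrete inaccuracy: for the type (3) gradations you describe the degree-$0$ excess as ``one extra toral derivation'' coming from independent scalings; but the independent scalings of the two pieces of $\mfg_{-1}$ already lie in the two-dimensional centre of $\mfg_0$, and in the contact subcase $(A_l,\{\alpha_1,\alpha_l\})$ the excess $\mfg_0(\mfm)/\mfg_0$ is the whole of $\mathfrak{csp}(\mfg_{-1})/\mfg_0$, far from one-dimensional. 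The qualitative conclusion $\mfg_0(\mfm)\supsetneq\mfg_0$ is what matters and is correct, but the quantitative description should be dropped or verified case by case (compare Lemma \ref{Lemma1} of the paper, which computes the decomposition-preserving subalgebra $\mfg_0'$ rather than $\mfg_0(\mfm)$ itself).
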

\smallskip

\begin{definition} We divide simple graded Lie algebras $\mfg=\bigoplus_{k\in \Z}\mfg_k$ not isomorphic with $(A_l,\{\alpha_1\})$ or $(C_l,\{\alpha_1\})$ into three types $I, II$ and $III$ as follows.
\begin{enumerate}
\item $\mfg=\bigoplus_{k\in \Z}\mfg_k$ is  {\it of type $I$} if it is the prolongation of $\mfm=\bigoplus_{k<0}\mfg_k$.
\item $\mfg=\bigoplus_{k\in \Z}\mfg_k$  is
{\it of type $II$}  if it is of depth one or contact gradation, and not isomorphic with  $(A_{l},\{\alpha_1,\alpha_l\})$.
\item $\mfg=\bigoplus_{k\in \Z}\mfg_k$ is {\it of type $III$} if it is isomorphic with  $(A_{l},\{\alpha_1,\alpha_k\}) $  $(2\leq k\leq l$) or  $(C_{l},\{\alpha_1,\alpha_l\}) $.
\end{enumerate}

 The  associated homogeneous space  $G/P$ is said to be of types $I$, $II$ or $III$ if  $\mfg=\bigoplus_{k\in \Z}\mfg_k$ is of types $I$, $II$ or $III$, respectively.
\end{definition}

\subsection{Infinitesimal automorphisms and the prolongation}\label{subsection:auto-prplongation}
Let  $\mfg=\bigoplus_{k=-\mu}^{\mu}\mfg_k$ be a simple graded Lie algebra.  We will explain how local vector fields on $G/P$ preserving the differential system $D$  are related to the prolongation.
 Recall that the differential system $(G/P, D)$ is locally isomorphic with the differential system $(N^{\prime -}, D_{\mfm^\prime}).$  Thus we may work with $(N^{\prime -}, D_{\mfm^\prime})$ to give a local picture of $D$ on $G/P.$
 We  only give a sketch of a relationship between these two objects.  We refer to Section $2$ of \cite{Ya} for a complete description for the relation.  We begin with the `special chart' $(N^-,D_\mfm).$
\smallskip

Let $\omega: N^-\rightarrow \mfm$  be the Maurer-Cartan form on $N^-$.
Given a vector field $X$ on $N^-,$ we have a $\mfm$-valued function $f_X:
N^-\rightarrow \mfm$ defined by $f_X(h)=\xi(X_h)$ for $h\in N^-$. By the gradation  $\mfm=\bigoplus_{k=-\mu}^{-1}\mfg_k,$ we can write $$f_X=\sum_{k=-\mu}^{-1}f_X^k,$$ where $f_X^k \in \mfg_k.$ Furthermore, if a vector field $X$ on $N^-$ is an infinitesimal automorphism of $(N^-, D_\mfm)$, then we can
associate to the vector field $X$ a sequence of functions $$\{f_X^k:N^-\rightarrow\mfg_k(\mfm)\}_{k\geq 0},$$ where $f_X^k$ are inductively defined; we omit the explicit definition of $f_X^k$; see Page $427$ of \cite{Ya}.
Therefore, given an infinitesimal automorphism $X$ of $(N^-,D_\mfm)$, we obtain $\prod_{k\geq-\mu} \mfg_k(\mfm)$-valued function $f_X$ defined by
\be \label{eq4:prolongation}
f_X:
=\prod_{k\geq -\mu} f_X^k.\ee
We call $f_X
=\prod_{k\geq -\mu} f_X^k$ the {\it presentation of $X$ with respect to $\mfm$}.
  The function $f_X$ encodes higher derivative of $X$ in the sense that the sequence $\{f_X^k\}_{k\geq -\mu}$ satisfies, for each $k,$
$$df^k_X=\sum_{r=-\mu}^{-1}[f_X^{k-r},\xi^r].$$
\smallskip

 Conversely, given $a=\sum _{k=-\mu}^la^k$ of $\mfg(\mfm)$ with $a^k\in \mfg_k(\mfm)$ and $h_0\in N^-$, there is a unique infinitesimal automorphism $X$ preserving $D_\mfm$  such that $f_X^k(h_0)=a^k$ for $k\leq l,$ and $f_X^k\equiv0$ for $k>l$.
Indeed, $f_X$ (and so X) exists as a solution to the following differential equation for $\mfg_k(\mfm)$-valued functions $u^k=f_X^k$ $(-\mu\leq k \leq l)$
\be \label{differential eq} du^k=\sum_{k<s\leq l}[u^s, \xi^{k-s}] \hspace{0.1in}\mathrm{for}\hspace{0.04in}k=-\mu,...,l,\ee
subject to the initial condition $u^k(h_0)=a^k\in \mfg_k(\mfm).$

\smallskip
 We remark that the solution $X$ to the differential equation (\ref{differential eq}) is unique: For the infinitesimal automorphism $X$ constructed above, and any $g_0\in N^-$ with $g_0\ne h_0$, let $b=X(g_0)\in \mfg(\mfm)$. Then an infinitesimal automorphism $Y$ on $N^-$ constructed as above for $b\in \mfg(\mfm)$ and $g_0\in N^-$ coincides with the infinitesimal automorphism $X$ on $N^-.$

Recall that $\mathcal{A}$ is the sheaf of Lie algebras of infinitesimal automorphisms on $N^-$ preserving $D_\mfm.$
 \begin{definition}For  a fixed $h_0\in N^-,$ define $\psi_{h_0}: \mfg(\mfm)\rightarrow \mca(N^-)$ by $\psi_{h_0}(a)=X$, where $X$ is an infinitesimal automorphism constructed as above. When $\psi_{h_0}(a)=X$, we say that $X$ is {\it determined by $a$ at $h_0.$}
\end{definition}

\begin{convention}\label{convention1}
So far we have constructed an infinitesimal automorphism $X$ on $N^-$ determined by $a\in \mfg(\mfm)$ at $h_0\in N^-$, and  the map $\psi_{h_0}:\mfg(\mfm) \hookrightarrow\mathcal{A}(N^-)$.
When identifying $(N^{-},D_\mfm)$ and $(M(\mfm), D|_{M(\mfm)})$,
we use the same notations for corresponding objects: an infinitesimal automorphism $X$ on $M(\mfm)$ determined by $a\in \mfg(\mfm)$ and $x_0\in M(\mfm)$, and the map $\psi_{x_0}:\mfg(\mfm) \hookrightarrow\mathcal{A}(M(\mfm))$, where $x_0=\iota_P(h_0)$ for $\iota_{P}:N^-\DistTo M(\mfm)$.
\end{convention}
Note that we can do the same construction of the above objects $X$ and $\psi_{x_0}$ on other principal open subsets $M(\mfm^\prime)$ corresponding to $P^\prime$ by working with $(N^{\prime -}, D_{\mfm^\prime})$.

\subsection{Extension of infinitesimal automorphisms}

\begin{lemma}\label{extension}
 Let $\mfg=\bigoplus_{k\in \Z}\mfg_k$ be a simple graded Lie algebra, and let $S=G/P$ be its associated homogeneous space.   Let $M(\mfm)$ and $M(\mfm^\prime)$ be the principal dense open subset of $S$ corresponding to parabolic subgroups $P$ and $P^\prime$, respectively. If $X$ is an infinitesimal automorphism preserving $D$ on $M(\mfm)$ determined by $a=\sum a^k\in \mfg$ at $x_0\in M(\mfm),$ then $X$ extends to $M(\mfm^\prime)$.
   \end{lemma}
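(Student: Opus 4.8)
The plan is to identify the locally defined $X$ with the restriction to $M(\mfm)$ of a globally defined vector field coming from the $G$-action on $S$, and then to extend simply by restricting that global field to $M(\mfm^\prime)$.

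Recall that $\mfg$, the Lie algebra of $G$, acts on $S=G/P$ by fundamental vector fields: for $c\in\mfg$ set $\widehat c_{gP}=\frac{d}{dt}\big|_{t=0}\exp(tc)\cdot gP$. Since the natural differential system $D$ is $G$-invariant (it is the associated bundle $G\times_P(\mfg^{-1}/\mfg^0)$, equivalently the subbundle of $T(G/P)$ induced by the left-invariant subbundle $\mfg^{-1}\subset TG$), the local flow of each $\widehat c$ preserves $D$, so $\widehat c$ is a globally defined infinitesimal automorphism of $(S,D)$. Under the identification $(M(\mfm),D|_{M(\mfm)})\cong(N^-,D_\mfm)$ recalled in Subsection \ref{subsection:auto-prplongation}, its restriction $\widehat c|_{M(\mfm)}$ is an infinitesimal automorphism of $(N^-,D_\mfm)$ and therefore has a presentation $f_{\widehat c}=\prod_k f^k_{\widehat c}$ with values in $\mfg(\mfm)$.

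The crux is that this presentation in fact takes values in the finite-dimensional subalgebra $\mfg\subseteq\mfg(\mfm)$. At the base point $x_0=\iota_P(e)$ the inductive definition of the $f^k_X$ (Page $427$ of \cite{Ya}) gives $f^k_{\widehat c}(x_0)=c^k$ for the grading decomposition $c=\sum_k c^k$, so that $f_{\widehat c}(x_0)=c$; in particular $f^k_{\widehat c}\equiv0$ for $k>\mu$, so no higher prolongation directions occur. An arbitrary $x_0\in M(\mfm)$ is carried to the base point by left translation by some $n\in N^-\subseteq G$, which is an automorphism of $(M(\mfm),D|_{M(\mfm)})$; presentations transform under it by $\mathrm{Ad}(n)$, which preserves $\mfg$. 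Hence $c\mapsto f_{\widehat c}(x_0)$ is a linear automorphism of $\mfg$ for every $x_0$. Given $a=\sum a^k\in\mfg$, choose the unique $b\in\mfg$ with $f_{\widehat b}(x_0)=a$. Then $\widehat b|_{M(\mfm)}$ and $X=\psi_{x_0}(a)$ are infinitesimal automorphisms of $(M(\mfm),D|_{M(\mfm)})$ with the same presentation at $x_0$, so by the uniqueness remark following (\ref{differential eq}) they coincide: $X=\widehat b|_{M(\mfm)}$.

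Finally, $\widehat b$ is defined on all of $S$, so $\widehat b|_{M(\mfm^\prime)}$ is an infinitesimal automorphism of $(M(\mfm^\prime),D|_{M(\mfm^\prime)})$ agreeing with $X$ on the dense open overlap $M(\mfm)\cap M(\mfm^\prime)$; this furnishes the desired extension (indeed $\widehat b$ extends $X$ to all of $S$). The only substantial step is the middle one: that the presentation of a $G$-fundamental vector field lands in $\mfg\subseteq\mfg(\mfm)$ and recovers $c$ at the base point. This is exactly the statement that the tautological inclusion $\mfg\hookrightarrow\mfg(\mfm)$ given by the prolongation is realized geometrically by the $G$-action, and proving it requires unwinding the definition of the functions $f^k_X$ together with the $N^-$-equivariance of the Maurer--Cartan construction; everything else is formal, resting on the uniqueness of solutions to (\ref{differential eq}).
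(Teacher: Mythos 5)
Your proof is essentially correct but takes a genuinely different route from the paper's. The paper never leaves the Tanaka-theoretic framework: it works with the two charts $(N^-,D_\mfm)$ and $(N^{\prime-},D_{\mfm^\prime})$, observes that the presentation $f_X^{\mfm}$ of $X$ is $\mfg$-valued, records how the value $b=f_X^{\mfm}(g_1)\in\mfg$ at a point of the overlap transforms under the change of chart (namely into $b^\prime=hbh^{-1}$ where $P^\prime=hPh^{-1}$), and then invokes uniqueness of solutions to the differential equation (\ref{differential eq}) to see that the automorphism of $(N^{\prime-},D_{\mfm^\prime})$ determined by $b^\prime$ agrees with $X$ on $M(\mfm)\cap M(\mfm^\prime)$; the extension is then obtained by gluing chart by chart. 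You instead identify $X$ outright with the restriction of a global fundamental vector field $\widehat b$ of the $G$-action, which makes the extension immediate (and in fact yields extension to all of $S$ in one step, a stronger and more transparent conclusion). The price is that you must know that the presentation of a fundamental vector field lands in $\mfg\subseteq\mfg(\mfm)$ and that $c\mapsto f_{\widehat c}(x_0)$ is a linear bijection of $\mfg$ --- precisely the statement that $\psi_{x_0}|_{\mfg}$ is inverse to taking fundamental vector fields. You flag this as the crux without proving it, but the paper's proof leans on an assertion of comparable depth (``from the construction of $X$, $f_X^{\mfm}$ is a $\mfg$-valued function on $N^-$'', plus the unproved coordinate-change formula $b\mapsto hbh^{-1}$), so the two arguments are at the same level of rigor; yours trades the chart-gluing and $\mathrm{Ad}(h)$-compatibility for a single geometric identification, while the paper's avoids having to realize $\psi_{x_0}(\mfg)$ geometrically at all.
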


\begin{proof}
 Choose $h\in G$ such that $P^\prime=hPh^{-1}$, so that $\mfg=\bigoplus_{k\in\Z}\mfg_k^\prime$, $\mfm^\prime$ and $N^{\prime -}=(N^\prime)^-$ are given as before. Let us work with two `charts' $(N^-, D_\mfm)$ and $(N^{\prime -}, D_{\mfm^\prime})$.
  Let  $f_X^\mfm$ and $f_X^{\mfm^\prime}$ be the  presentations of $X$ with respect to $\mfm$ and $\mfm^\prime$, respectively.
 Fix $x_1\in M(\mfm)\cap M(\mfm^\prime)$. Let $g_1$ and  $g_1^\prime$ be the points of $N^-$ and  $N^{\prime -}$, respectively,  corresponding to $x_1$ (so we have $ g_1^\prime=hgh^{-1})$.
 Put $b:=f^\mfm_X(g_1)$.  Then we see that  from the construction of $X$, $f_X^\mfm$ is  $\mfg$-valued function on $N^-$, and hence $b$ belongs to $\mfg.$
 Thus, the vector field $X$ is determined by $b\in \mfg$ at $g_1\in N^-$ for the chart $(N^-, D_\mfm)$, too. Note that, by the `coordinate change', the element $b$ of $\mfg$ on the chart $N^-$ changes into the element $b^\prime:=hbh^{-1}$ of $\mfg$ on the chart $N^{\prime -}$. Therefore, letting $X^\prime$ be an infinitesimal automorphism on $N^{\prime -}$ determined by $b^\prime \in \mfg$ at $g_1^\prime\in N^{\prime -}$, by the uniqueness of solutions to the differential equation (\ref{differential eq}), we have $$X^\prime_{M(\mfm)\cap M(\mfm^\prime)}=X_{M(\mfm)\cap M(\mfm^\prime)}$$ Therefore the vector field $X$ extends to a vector field, denoted $X$, on $M(\mfm)\cup M(\mfm^\prime)$.
\end{proof}

\subsection{Prolongation of infinitesimal automorphisms for $G/P$ of type I}

 \begin{proposition}(Corollary $5.4$ of \cite{Ya})\label{pro:typeI}
 Let $S=G/P$ be a homogeneous space of type $I$, and let $U$ be any connected open subset of $S.$ Then $\mathcal{A}(U)=\mfg.$
\end{proposition}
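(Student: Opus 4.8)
The plan is to assemble the local correspondence between infinitesimal automorphisms and the prolongation, recalled in Subsection~\ref{subsection:auto-prplongation}, with Yamaguchi's identification $\mfg=\mfg(\mfm)$ for type $I$ (Proposition~\ref{prop1}), and then to globalize by a sheaf argument over the principal charts. First I would set up the easy inclusion. The adjoint group $G$ acts on $S=G/P$ preserving the $G$-invariant differential system $D$, and since $G$ is simple with $P$ a proper parabolic this action is effective; hence there is an injective Lie algebra homomorphism $\iota\colon\mfg\hookrightarrow\Gamma(S,TS)$ whose image lies in $\mca(S)$ and consists of vector fields preserving $D$. For any connected open $U$, restricting these global fields gives $\iota(\mfg)|_U\subseteq\mca(U)$, and this restriction is injective by unique continuation (a holomorphic vector field vanishing on a nonempty open subset of the connected $S$ vanishes identically), so $\iota(\mfg)|_U\cong\mfg$. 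It then remains to prove the reverse inclusion $\mca(U)\subseteq\iota(\mfg)|_U$.

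The heart of the matter is the chart-level equality. Working on a principal chart $M(\mfm)\cong(N^-,D_\mfm)$, the correspondence of Subsection~\ref{subsection:auto-prplongation} attaches to each $X\in\mca(M(\mfm))$ its presentation $f_X=\prod_{k\geq-\mu}f_X^k$ with $f_X^k$ valued in $\mfg_k(\mfm)$, and recovers $X$ from the single value $f_X(x_0)\in\mfg(\mfm)$ at any fixed $x_0$ as the unique solution $\psi_{x_0}(f_X(x_0))$ of the differential equation~(\ref{differential eq}). Because $S$ is of type $I$, Proposition~\ref{prop1} gives $\mfg(\mfm)=\mfg$, hence $\mfg_k(\mfm)=\mfg_k=0$ for $k>\mu$, so every presentation is $\mfg$-valued and $\psi_{x_0}\colon\mfg\to\mca(M(\mfm))$ is onto. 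This forces $\dim\mca(M(\mfm))\leq\dim\mfg$; combined with the subspace $\iota(\mfg)|_{M(\mfm)}\subseteq\mca(M(\mfm))$ of dimension $\dim\mfg$, we conclude $\mca(M(\mfm))=\iota(\mfg)|_{M(\mfm)}$ (consistently with Lemma~\ref{extension}, which shows the fields $\psi_{x_0}(a)$ are restrictions of global ones).

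To globalize, I would first push this down to arbitrary connected opens inside a chart: given $X\in\mca(V)$ with $V\subseteq M(\mfm)$ connected open, set $a=f_X(x_0)$ for some $x_0\in V$; then $\psi_{x_0}(a)\in\mca(M(\mfm))=\iota(\mfg)|_{M(\mfm)}$ agrees with $X$ on $V$ by uniqueness of solutions of~(\ref{differential eq}) on the connected set $V$, so $X\in\iota(\mfg)|_V$. Thus $\mca(V)=\iota(\mfg)|_V$ for every connected open $V$ contained in a principal chart. Since the principal charts $M(\mfm^\prime)$, indexed by the conjugates of $P$, cover $S$ and the connected opens inside them form a basis of the topology, the two subsheaves $\mca$ and $\iota(\mfg)$ of the tangent sheaf agree on a basis and hence are equal. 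Evaluating on $U$ gives $\mca(U)=\iota(\mfg)|_U\cong\mfg$, as claimed.

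The main obstacle is the chart-level identification, which rests entirely on the finiteness input $\mfg(\mfm)=\mfg$ of Proposition~\ref{prop1}: this is precisely what fails for types $II$ and $III$, where the prolongation is strictly larger than $\mfg$ and $\mca(M(\mfm))$ is correspondingly too big, so that one must instead cut it down by requiring automorphisms to preserve $\msc$ or the decomposition $D=D^{(1)}\oplus D^{(2)}$. A secondary point to treat with care is the globalization: one must arrange that all local identifications factor through the \emph{single} global action $\iota(\mfg)$ rather than through abstract copies of $\mfg$ glued by $\mathrm{Ad}$-transitions, since it is this that makes $\iota(\mfg)$ an honest constant subsheaf and rules out any monodromy obstruction over a non-simply-connected $U$.
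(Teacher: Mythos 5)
Your proposal is correct and follows essentially the same route as the paper: the key step in both is that for type $I$ the presentation $f_X$ of any $X\in\mca(M(\mfm))$ is $\mfg(\mfm)=\mfg$-valued by Proposition~\ref{prop1}, so $\psi_{x_0}$ is onto, and one then globalizes over the principal charts. The only cosmetic difference is that you obtain the lower bound $\mfg\subseteq\mca(U)$ from the global $G$-action and glue the two subsheaves over a basis, whereas the paper invokes Lemma~\ref{extension} to extend chart-level fields and then sandwiches $\mca(V)$ between $\mca(G/P)$ and $\mca(U)$ for a small $U\subset V$; these amount to the same argument.
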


\begin{proof}
First, suppose that $U\subset G/P$ is an  open subset of $M(\mfm)$ (with $o\in U$).
Let  $X\in \mathcal{A}(U)$. Then $f_X$ is a $\mfg(\mfm)$-valued function on $U$. Indeed, by the hypothesis that $\mfg(\mfm)=\mfg$ is finite dimensional,
 there is some $l$ such that if $k\geq l$, then $\mfg_k(\mfm)=0$. Thus if $k\geq l,$ then $f_X^k(x)=0$ for all $x\in U.$ Thus $f_X$ is $\mfg(\mfm)$-valued function. Now fix $x$, say $x=o.$ Let $a=X(o).$ It is obvious that $X=\psi_o(a).$ Since $X$ is arbitrary, $\psi_o$ is onto and hence an isomorphism.
Therefore we have shown that for $U\subset M(\mfm)$ $$\mathcal{A}(U)=\mathcal{A}(M(\mfm))=\mfg(\mfm)=\mfg.$$
Similarly, for any $U\subset M(\mfm^\prime)$ (corresponding to $P^\prime$), we have $$\mathcal{A}(U)=\mathcal{A}(M(\mfm^\prime))=\mfg.$$
Now if we are given a vector field $X\in \mathcal{A}(M(\mfm))$, then   using Lemma \ref{extension}, we can extend it to the entire space $G/P$. Thus the Lie algebra of global sections $\mathcal{A}(G/P)$ is isomorphic to $\mfg.$
Let $V$ be any connected open subset of $G/P$. Choose $U\subset M(\mfm^\prime)$ so that $U\subset V$. Then  we have $\mathcal{A}(V)=\mathcal{A}(G/P)=\mathcal{A}(U)=\mfg$. This completes the proof.
\end{proof}

\subsection{Prolongation of infinitesimal automorphism  for $G/P$ for  type II}
 Note that tangent space $T_o(G/P)$ is canonically  identified  with $\mfg_{-\mu}\oplus\cdots \oplus \mfg_{-1}$. Via this identification, the VMRT $\mathscr{C}_o$ at $o$ is  inside
$ \P(\mfg^{-1})$

\begin{lemma}\label{Lemma-contact}
 Given a  simple graded Lie algebra $\mfg=\bigoplus_{k\in \Z}\mfg_k$ of  $II,$  let  $G_0^\prime\subset \mathrm{Aut}(\mfg_{-1})$ be a group of automorphisms preserving $\mathscr{C}_o \subset \P\mfg_{-1}$, and $\mfg_0^\prime$ be the Lie algebra of $G_0^\prime$.  Then the Lie algebra $\mfg_0 $ coincides with the Lie algebra $\mfg_0^\prime$.

\end{lemma}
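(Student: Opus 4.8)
The plan is to establish the two inclusions $\mfg_0\subseteq\mfg_0^\prime$ and $\mfg_0^\prime\subseteq\mfg_0$ separately, after first fixing the sense in which $\mfg_0$ sits inside $\mathfrak{gl}(\mfg_{-1})$. I regard $\mfg_0$ in $\mathfrak{gl}(\mfg_{-1})$ via the adjoint action $u\mapsto[u,\,\cdot\,]|_{\mfg_{-1}}$, and I would first check this is faithful: if $u\in\mfg_0$ kills $\mfg_{-1}$, then the generation property $\mfg_{-k}=[\mfg_{-1},\mfg_{-k+1}]$ and the Jacobi identity give $[u,\mfm]=0$, and pairing against the Killing form (nondegenerate on $\mfg_k\times\mfg_{-k}$ and $\mfg_0$-invariant) propagates this to $[u,\mfg_k]=0$ for all $k\neq0$, so $u$ is central and hence $0$ by simplicity; this is the analogue of (\ref{property 3}). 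With this identification the comparison is meaningful, and the inclusion $\mfg_0\subseteq\mfg_0^\prime$ is immediate from homogeneity: $\msc_o$ is the closed $G_0$-orbit (the projectivization of the highest-weight orbit) in $\P\mfg_{-1}$, hence is $G_0$-invariant, and differentiating the $G_0$-action shows every element of $\mfg_0$ is tangent to the cone $\hat{\msc}_o$, i.e. lies in $\mfg_0^\prime$.

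For the reverse inclusion I would first show that every $A\in\mfg_0^\prime$ is a gradation-preserving derivation of the symbol algebra $\mfm$, so that $\mfg_0\subseteq\mfg_0^\prime\subseteq\mfg_0(\mfm)=\mathrm{Der}_0(\mfm)$. In the depth-one case $\mfm=\mfg_{-1}$ is abelian and there is nothing to prove. In the contact case $\mfm=\mfg_{-1}\oplus\mfg_{-2}$ is Heisenberg, the bracket $\mfg_{-1}\times\mfg_{-1}\to\mfg_{-2}\cong\C$ being a symplectic form $\omega$; here I would use that $\msc_o$ is Legendrian for $\omega$ and that $\omega$ is the unique form (up to scale) for which $\hat{\msc}_o$ is isotropic, so that any linear automorphism of $\hat{\msc}_o$ is conformally symplectic and therefore extends to a derivation of $\mfm$. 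Granting this, $\mfg_0^\prime$ consists exactly of the gradation-preserving derivations of $\mfm$ that preserve $\msc_o$.

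It then remains to prove the dimension bound $\dim\mfg_0^\prime\le\dim\mfg_0$. Because $G_0\subseteq G_0^\prime$ and $G_0$ already acts transitively on $\hat{\msc}_o\setminus\{0\}$ (it is transitive on $\msc_o$ and contains the scalars $\C^\ast\,\mathrm{Id}$ coming from the grading element), the larger group $G_0^\prime$ is transitive there as well, so by orbit--stabilizer $\dim\mfg_0^\prime-\dim\mfg_0=\dim\mathrm{Stab}_{\mfg_0^\prime}(v_0)-\dim\mathrm{Stab}_{\mfg_0}(v_0)$ for a highest-weight vector $v_0\in\hat{\msc}_o$. Thus the whole statement reduces to showing that an $A\in\mfg_0^\prime$ fixing $v_0$ already lies in $\mathrm{Stab}_{\mfg_0}(v_0)$; equivalently, that the infinitesimal linear symmetries of $\hat{\msc}_o$ fixing $v_0$ do not exceed those coming from $\mfg_0$.

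This last step is the \textbf{main obstacle}: it amounts to ruling out extra, non-geometric infinitesimal symmetries of the cone $\hat{\msc}_o$, and it cannot be read off from the symbol $\mfm$ alone (indeed for type $II$, $\mfg$ is not the prolongation of $\mfm$, only of $(\mfm,\mfg_0)$ by Proposition \ref{prop1}), which is precisely why the VMRT must enter. I expect to discharge it using the explicit description of the type-$II$ VMRTs recorded in Section \ref{section:VMRT} --- Segre products, quadrics, spinor and Freudenthal varieties in the cominuscule (depth-one) cases, and $\P^1\times Q$ in the contact cases --- together with the facts that each is $\mfg_0$-irreducible inside $\mfg_{-1}$, is cut out by quadrics, and has classically known linear automorphism algebra equal to $\mfg_0$; the conformal-symplectic reduction of the second paragraph is exactly what makes the contact computation agree with $\mfg_0$ rather than with the full $\mathfrak{csp}(\mfg_{-1})$.
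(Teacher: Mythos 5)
You should first be aware that the paper does not actually prove this lemma: its ``proof'' consists of two citations, to Section $3$ of \cite{FH1} for the depth-one case and to Section $2$ of \cite{Hw1} for the contact case. Measured against that, your proposal does strictly more work, and the intermediate reductions you set up are sound: the faithfulness of $\mfg_0\to\mathfrak{gl}(\mfg_{-1})$ via the generation property and the Killing form is the standard argument (it is the $k=0$ case of the nondegeneracy property (\ref{property 3})); the inclusion $\mfg_0\subseteq\mfg_0^\prime$ from the fact that $\msc_o$ is the closed $G_0$-orbit is exactly right; and the observation that in the contact case one must first show elements of $\mfg_0^\prime$ are conformally symplectic, hence extend to derivations of the Heisenberg algebra $\mfm$, correctly identifies why the contact case does not reduce to the abelian one. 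Your remark that the lemma cannot follow from the symbol alone because $\mfg$ is only the prolongation of $(\mfm,\mfg_0)$ and not of $\mfm$ is also the right way to see why the VMRT is indispensable here.

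The genuine gap is that your final step is circular in the depth-one case and merely deferred in the contact case. In depth one, $\mfg_0^\prime$ \emph{is by definition} the Lie algebra of linear automorphisms of $\mfg_{-1}$ preserving $\msc_o$; so appealing to ``the classically known fact that the linear automorphism algebra of these varieties equals $\mfg_0$'' is not a discharge of the obstacle but a restatement of the lemma, and it moreover renders your orbit--stabilizer reduction redundant (if that fact is granted, the lemma is immediate). The actual content --- that for each Segre, quadric, spinor, Freudenthal and sub-adjoint variety the stabilizer of a highest-weight vector in $\mfg_0^\prime$ is no larger than in $\mfg_0$, and, in the contact case, that the symplectic form is the unique skew form up to scale annihilating the tangent cone of $\hat{\msc}_o$ --- is precisely what Sections $3$ of \cite{FH1} and $2$ of \cite{Hw1} establish, and your proposal neither reproduces those verifications nor cites them. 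To close the argument you must either carry out the case-by-case computation (which is feasible but is the whole of the work) or explicitly invoke those references, at which point your proof collapses to the paper's.
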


\begin{proof}
For simple graded Lie algebras with depth one, we refer to Section $3$ of \cite{FH1}. For the contact cases, see Section $2$ of \cite{Hw1}.
\end{proof}

\begin{proposition}\label{pro:typeII}
Let $G/P$ be a homogeneous space of type $II$, and $U$ be any connected open subset of $G/P$. Then $\mathcal{A}_{\msc}(U)=\mfg.$
\end{proposition}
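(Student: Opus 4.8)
The plan is to follow the proof of Proposition \ref{pro:typeI} almost verbatim, with two substitutions: replace the sheaf $\mca$ of $D$-preserving infinitesimal automorphisms by the sheaf $\mca_\msc$ of those preserving the VMRT $\msc$, and replace the full prolongation $\mfg(\mfm)$ by the constrained prolongation $\mfg(\mfm,\mfg_0)$. The point is that for type $II$ the full prolongation $\mfg(\mfm)$ is infinite dimensional (depth-one and contact gradations are precisely the exceptions in Proposition \ref{prop1}), so the finite-dimensionality argument of Proposition \ref{pro:typeI} breaks down for $\mca$; however, by the second assertion of Proposition \ref{prop1}, $\mfg=\mfg(\mfm,\mfg_0)$ is finite dimensional, and the extra VMRT constraint is exactly what cuts $\mca$ down to this constrained prolongation.

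First I would work on a chart $U\subset M(\mfm)$ containing $o$ and take $X\in\mca_\msc(U)$. Since a field preserving $\msc$ preserves the differential system $D$ spanned by it in each fibre, $X$ preserves $D$ and hence admits a presentation $f_X=\prod_{k\geq-\mu}f_X^k$ as in (\ref{eq4:prolongation}). The key claim is that $f_X$ is valued in $\mfg(\mfm,\mfg_0)$. For the degree-zero term, the condition that the infinitesimal flow of $X$ preserve $\msc\subset\P\mfg_{-1}$ forces $f_X^0(h)$ to lie, at every $h$, in the Lie algebra $\mfg_0^\prime$ of automorphisms of $\mfg_{-1}$ stabilizing $\msc_o$; by Lemma \ref{Lemma-contact}, $\mfg_0^\prime=\mfg_0$, so $f_X^0$ is $\mfg_0$-valued. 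For $k\geq1$ I would argue inductively using the presentation relations (\ref{differential eq}), which express $[f_X^k,\mfg_{-1}]$ through the lower term $f_X^{k-1}$: once $f_X^{k-1}$ is valued in $\mfg_{k-1}$, the relation $[f_X^k,\mfg_{-1}]\subset\mfg_{k-1}$ places $f_X^k$ in $\mfg_k=\{u\in\mfg_k(\mfm)\mid[u,\mfg_{-1}]\subset\mfg_{k-1}\}$, which is precisely the $k$-th piece of $\mfg(\mfm,\mfg_0)$.

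Granting the claim, $f_X$ is valued in the finite-dimensional algebra $\mfg(\mfm,\mfg_0)=\mfg$, so there is some $l$ with $f_X^k\equiv0$ for $k\geq l$, and hence $X=\psi_o(X(o))$ exactly as in Proposition \ref{pro:typeI}; thus $\psi_o\colon\mfg\to\mca_\msc(U)$ is onto. For the reverse inclusion, every $a\in\mfg$ determines via $\psi_o$ a fundamental vector field of the $G$-action on $G/P$, and since $\msc$ is $G$-invariant these fields preserve $\msc$, so $\psi_o$ lands in $\mca_\msc(U)$. This gives $\mca_\msc(U)=\mfg$ for $U\subset M(\mfm)$, and the same holds on each $M(\mfm^\prime)$. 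Finally I would globalize: by Lemma \ref{extension} a field in $\mca_\msc(M(\mfm))$ extends across charts to all of $G/P$, and since the extension is again determined by an element of $\mfg$ it still preserves the $G$-invariant $\msc$; intersecting an arbitrary connected open $V$ with some $M(\mfm^\prime)$ then yields $\mca_\msc(V)=\mca_\msc(G/P)=\mfg$.

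The main obstacle is the inductive claim in the second paragraph: a priori the VMRT condition constrains only the degree-zero part $f_X^0$, and one must verify that the Yamaguchi presentation equations (\ref{differential eq}) propagate this single constraint upward so that every $f_X^k$ lands in the constrained prolongation $\mfg(\mfm,\mfg_0)$ rather than in the larger $\mfg(\mfm)$. This is exactly where Lemma \ref{Lemma-contact} is indispensable: it identifies the VMRT-stabilizer $\mfg_0^\prime$ with the grading piece $\mfg_0$ used to build $\mfg(\mfm,\mfg_0)$, so that the degree-zero constraint coming from $\msc$ coincides with the seed of the constrained prolongation, and the induction then reproduces the defining recursion of $\mfg(\mfm,\mfg_0)$.
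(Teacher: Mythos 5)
Your proposal is correct and follows essentially the same route as the paper: restrict to a chart $M(\mfm)$, use Lemma \ref{Lemma-contact} to identify the VMRT-stabilizer $\mfg_0^\prime$ with $\mfg_0$, invoke the second assertion of Proposition \ref{prop1} to get $\mfg(\mfm,\mfg_0^\prime)=\mfg(\mfm,\mfg_0)=\mfg$ finite dimensional, conclude via $\psi_o$ as in Proposition \ref{pro:typeI}, and globalize with Lemma \ref{extension}. The only difference is that you spell out the inductive propagation of the degree-zero constraint through the presentation equations, which the paper compresses into the phrase that $\psi_x$ ``descends to'' $\psi_x^\msc:\mfg(\mfm,\mfg_0^\prime)\hookrightarrow\mca_\msc(U)$.
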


\begin{proof}
 Suppose that $U\subset G/P$ is an open subset of $M(\mfm)$ (with $o\in U$).
 Then by the definitions of $\mfg_0^\prime$ and $\mca_\msc(U)$, $\psi_{x}:\mfg(\mfm)\hookrightarrow \mathcal{A}(U)$ descends to  $\psi_{x}^\msc:\mfg(\mfm,\mfg_0^\prime)\hookrightarrow \mathcal{A}_\msc(U)$ for any $x\in U$. But since $\mfg_0=\mfg_0^\prime$, we have that   $\mfg(\mfm,\mfg_0^\prime)=\mfg(\mfm,\mfg_0)=\mfg$ is finite dimensional. Then,
 as in the proof of Proposition \ref{pro:typeI}, by fixing $x$, say $x=o,$ $\psi_o^\msc$ is an isomorphism. Thus we have $$\mca_\msc(U)=\mca_\msc(M(\mfm))=\mfg(\mfm,\mfg_0^\prime)=\mfg(\mfm,\mfg_0)=\mfg.$$
 The proof for a general connected open subset $U\subset G/P$ is similar to the proof of Proposition \ref{pro:typeI}.
\end{proof}

\subsection{Prolongation of infinitesimal automorphisms for $G/P$ type III }\label{subsection:decomposition}

 Let  $\mfg=\bigoplus_{k\in \Z}\mfg_k$ be a simple graded Lie algebra of type $III,$ that is, $\mfg=\bigoplus_{k\in \Z}\mfg_k$ is isomorphic with  $(A_{l},\{\alpha_1,\alpha_k\}) $  $(2\leq k\leq l$) or  $(C_{l},\{\alpha_1,\alpha_l\}) $. Let $P_{1}$  and $P_{2}$ be the parabolic subgroups corresponding to the first root $\alpha_1$ and the second root $\alpha_k$, respectively. Then there are two natural projections from the associated homogeneous space $$\pi_i: G/P\rightarrow G/P_{i},$$  For $i=1,2,$ let $\mfg_{-1}^{(i)}$ be the kernel of the differential $d\pi_{i}:T_o(G/P)\rightarrow T_{o}(G/P_i)$. Then under the identification, $T_o(G/P)=\mfg_{-\mu}\oplus\cdots \oplus \mfg_{-1}$, $\mfg_{-1}$ decomposes into $\mfg_{-1}=\mfg_{-1}^{(1)}\oplus\mfg_{-1}^{(2)}.$
   To be concrete, $\mfg_{-1}^{(1)}$ and $\mfg_{-1}^{(2)}$ have the decompositions of root spaces; $$\mfg_{-1}^{(1)}=\bigoplus_{\alpha}\mfg_\alpha,\hspace{0.1in} \mfg_{-1}^{(2)}=\bigoplus_{\beta}\mfg_\beta,$$
 where for $(A_l,\{\alpha_1,\alpha_k\})$, $\alpha=\lambda_i-\lambda_j$ runs over $k+1\leq i\leq l+1$ and $2\leq j \leq k$, and $\beta=\lambda_i-\lambda_1$  over $2\leq i\leq k,$ and for  $(C_l,\{\alpha_1,\alpha_l\})$, $\alpha=-\lambda_i-\lambda_j$  over $2\leq i\leq j\leq l$ and $\beta=\lambda_i-\lambda_1$ over $2\leq i\leq l.$

More generally, by the construction of $D \subset TM$, $D$ decomposes into $D=D^{(1)} \oplus D^{(2)}$ such that $(D^{(i)})_o=\mfg_{-1}^{(i)}$ for $i=1,2.$

 \begin{lemma}\label{Lemma1}
   For  a simple graded Lie algebra $\mfg=\bigoplus_{k\in \Z}\mfg_k$  of type $III$, let $\mfg^\prime_0 \subset \mfg_0(\mfm)$ be the Lie algebra of elements
preserving the decomposition $\mfg_{-1}=\mfg_{-1}^{(1)}\oplus \mfg_{-1}^{(2)}.$ Then we have $\mfg_0=\mfg_0^\prime.$
\end{lemma}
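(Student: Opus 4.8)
The plan is to establish the inclusion $\mfg_0 \subseteq \mfg_0^\prime$ directly and then to show the two algebras have the same (finite) dimension, finiteness being automatic since both sit inside $\mathrm{Der}(\mfm)$ with $\mfm$ finite dimensional. The inclusion $\mfg_0 \hookrightarrow \mfg_0(\mfm)$ is the one coming from the prolongation structure $\mfg = \mfg(\mfm,\mfg_0)$ of Proposition \ref{prop1}. First I would refine the grading $\mfg = \bigoplus_k \mfg_k$ to the $\Z^2$-grading $\mfg = \bigoplus_{(a,b)}\mfg_{(a,b)}$ recording the multiplicities $n_1(\gamma)$ and $n_k(\gamma)$ (resp. $n_l(\gamma)$) of the two roots in $\triangle_1 = \{\alpha_1,\alpha_k\}$ (resp. $\{\alpha_1,\alpha_l\}$), so that $\mfg_j = \bigoplus_{a+b=j}\mfg_{(a,b)}$. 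Inspection of the root data (as recorded in Subsection \ref{subsection:decomposition}) gives $\mfg_{-1}^{(1)} = \mfg_{(0,-1)}$ and $\mfg_{-1}^{(2)} = \mfg_{(-1,0)}$, and shows that the only roots $\gamma$ with $n_1(\gamma)+n_k(\gamma)=0$ (resp. $n_1(\gamma)+n_l(\gamma)=0$) are those with $n_1(\gamma)=n_k(\gamma)=0$; hence $\mfg_0 = \mfg_{(0,0)}$. Since $\mathrm{ad}(\mfg_{(0,0)})$ preserves the whole $\Z^2$-grading, it preserves $\mfg_{-1}^{(1)}$ and $\mfg_{-1}^{(2)}$ separately, so $\mfg_0 \subseteq \mfg_0^\prime$.

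Next I would identify $\mfg_0^\prime$ with the algebra of bigrading-preserving derivations of $\mfm$. In both cases every homogeneous piece $\mfg_{-j}$ with $j\geq 2$ consists of a single bidegree (for $A_l$, only $(-1,-1)$; for $C_l$, the pieces $(-1,-1)$ and $(-2,-1)$), so a gradation-preserving derivation automatically respects the bigrading on $\bigoplus_{j\geq 2}\mfg_{-j}$, while on $\mfg_{-1}$ preserving the decomposition is the same as preserving the bidegree. Thus $\mfg_0^\prime = \mathrm{Der}_{(0,0)}(\mfm)$, and it remains to compute its dimension.

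Writing $V_i = \mfg_{-1}^{(i)}$ and reading off the brackets, in the $A_l$ case $\mfm$ is $2$-step with $[V_1,V_2]=\mfg_{-2}$ and $V_1 \cong \mathrm{Hom}(V_2,\mfg_{-2})$ via the bracket (the bracket being evaluation); a bidegree-$0$ derivation is then freely prescribed by its restrictions to $V_2$ and to $\mfg_{-2}$, the restriction to $V_1$ being forced, whence $\dim \mfg_0^\prime = (\dim V_2)^2 + (\dim\mfg_{-2})^2 = (k-1)^2 + (l+1-k)^2$. On the other hand $\mfg_0 = \mfg_{(0,0)}$ is the block-diagonal Levi $\mathfrak{s}(\mathfrak{gl}_1 \oplus \mathfrak{gl}_{k-1}\oplus \mathfrak{gl}_{l+1-k})$, of dimension $(k-1)^2 + (l+1-k)^2$, so the inclusion is an equality. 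In the $C_l$ case $\mfm$ is $3$-step with $V_2 \cong U$, $\mfg_{-2}\cong U^*$, $V_1 \cong \mathrm{Sym}^2 U^*$ and $\mfg_{-3}\cong \C$ for $U = \C^{l-1}$, the brackets being the natural contractions and the pairing; solving the two derivation identities shows that such a derivation is determined by an arbitrary $A\in\mathrm{End}(U)$ together with the scalar by which it acts on $\mfg_{-3}$, giving $\dim\mfg_0^\prime = (l-1)^2 + 1$, which equals $\dim\mfg_0 = \dim\bigl(\mathfrak{gl}(l-1)\oplus \C\bigr)$ computed from the root count $\mfg_0 = \mfh \oplus \bigoplus_{n_1(\gamma)=n_l(\gamma)=0}\mfg_\gamma$.

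The main obstacle is the $C_l$ computation. Because $\mfm$ is $3$-step and $V_1 = \mathrm{Sym}^2 U^*$ is only a symmetric part of a full Hom-space, one must verify that the action $\mfg_{-2}\to\mfg_{-2}$ forced by the top identity $[\mfg_{-1}^{(2)},\mfg_{-2}]=\mfg_{-3}$ is precisely what makes the lower identity $[\mfg_{-1}^{(1)},\mfg_{-1}^{(2)}]=\mfg_{-2}$ solvable for the action on $V_1$, so that no freedom survives beyond $\mathrm{End}(U)\oplus\C$; granting this, the dimension count closes the proof.
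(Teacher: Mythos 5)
Your proposal is correct, and it takes a genuinely more self-contained route than the paper. The paper proves the lemma by citing Section 5 of Doubrov--Komrakov--Morimoto for $(A_l,\{\alpha_1,\alpha_k\})$ with $2\leq k\leq l-1$ and for $(C_l,\{\alpha_1,\alpha_l\})$, and only treats $(A_l,\{\alpha_1,\alpha_l\})$ by hand: it fixes explicit bases $\mathbf{e}_i,\mathbf{f}_i$ of the two summands and counts the $l(l-2)$ independent linear equations that the derivation conditions for the relations among generators impose inside $\mathrm{End}(\mfg_{-1}^{(1)})\oplus\mathrm{End}(\mfg_{-1}^{(2)})$, arriving at $(l-1)^2+1$. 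You instead handle all of type $III$ uniformly with the same two-step skeleton (obvious inclusion, then dimension count): the $\Z^2$-refinement of the grading gives $\mfg_0=\mfg_{(0,0)}\subseteq\mfg_0^\prime$ for free (all coefficients of a root of $A_l$ or $C_l$ have the same sign, so $n_1+n_k=0$ forces both to vanish), the fact that each $\mfg_{-j}$ with $j\geq 2$ is concentrated in one bidegree identifies $\mfg_0^\prime$ with the bidegree-preserving derivations, and the count becomes invariant linear algebra — e.g.\ in the $A_l$ case $V_1\cong\mathrm{Hom}(V_2,\mfg_{-2})$ with bracket equal to evaluation forces $\varphi|_{V_1}=\varphi|_{\mfg_{-2}}\circ u-u\circ\varphi|_{V_2}$, giving $(k-1)^2+(l+1-k)^2$ on both sides. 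I checked the numerology in all cases, including the point you flag as the main obstacle for $C_l$: writing $\varphi|_{\mfg_{-3}}=c$ and $\varphi|_{V_2}=A$, the pairing $V_2\otimes\mfg_{-2}\to\mfg_{-3}$ forces $\varphi|_{\mfg_{-2}}=c\,\mathrm{id}-A^*$, and the resulting candidate $q\mapsto (c-A^*)\circ\tilde{q}-\tilde{q}\circ A$ for $\varphi|_{V_1}$ is self-adjoint, hence does lie in $\mathrm{Sym}^2U^*$, so $\dim\mfg_0^\prime=(l-1)^2+1=\dim\mfg_0$. What your approach buys is independence from the external reference and a single argument covering every type $III$ case; what it costs is that the solvability check just described must actually be written out, whereas the paper's one hand-done case is a purely mechanical equation count.
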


\begin{proof}
 We refer to Section $5$ of \cite{Do1} for a proof for  $\mfg=\bigoplus_{k\in \Z} \mfg_k$ isomorphic to $(A_{l},\{\alpha_1,\alpha_k\}) $ ($2\leq k\leq l-1$) or $(C_l,\{\alpha_1, \alpha_l\})$.

 For the case  $(A_l,\{\alpha_1,\alpha_l\}),$ we give an elementary proof. Let $\mathrm{Der}_0(\mfm)$ be the algebra of gradation preserving derivations on $\mfm$, and let $\mfg_0^\prime \subset \mathrm{Der}_0(\mfm)$ be the algebra of derivations preserving the decomposition $\mfg_{-1}=\mfg_{-1}^{(1)}\oplus \mfg_{-1}^{(2)}.$
We will show that  $\mfg_0^\prime=\mfg_0$.
 Since it is obvious that   $\mfg_0\subset \mfg_0^\prime$, it is enough to show that $\mathrm{dim} \hspace{0.02in}\mfg_0^\prime=\mathrm{dim}\hspace{0.02in}\mfg_0$. For this, first note that since $\mfg_{-2}$ is generated by $\mfg_{-1},$
 $\varphi\in \mathrm{Der}_0(\mfm)$ is completely determined by its values on $\mfg_{-1}.$ Therefore the assignment $\varphi \mapsto \varphi|_{\mfg_{-1}}$
 defines an embedding of vector spaces
 $$\iota: \mfg_0^\prime\hookrightarrow  \mathrm{End} (\mfg_{-1}^{(1)})\oplus \mathrm{End} (\mfg_{-1}^{(2)}).$$ Now we will identify the equations defining the subspace  $\mfg_0^\prime$ in $\mathrm{End} (\mfg_{-1}^{(1)})\oplus \mathrm{End} (\mfg_{-1}^{(2)}).$ Note that these defining equations come from the derivation conditions among generators of $\mfg_{-1}$. So we fix a basis of $\mfg_{-1}$ as follows.
  Recall that $E_{i,j}$ is the $(l+1)\times (l+1)$ matrix whose $(i,j)$-th entry is $1$ and other entries are all $0.$  Write, for convenience,
 $$\mathbf{e}_{i}=E_{l+1,i+1},\hspace{0.15in} \mathbf{f}_{i}=E_{i+1,1} \hspace{0.15in} \mathrm{for}\hspace{0.08in} i=l,...,l-1\hspace{0.08in}.$$ Then $\{\mathbf{e}_1,\mathbf{e}_2...,\mathbf{e}_{l-1}\}$ (resp. $\{\mathbf{f}_1,\mathbf{f}_2,...,\mathbf{f}_{l-1}\}$ ) forms a standard basis of $\mfg_{-1}^{(1)}$ (resp.  $\mfg_{-1}^{(2)}$).

 Let $\{\mathbf{f}_1,...,\mathbf{f}_{l-1},\mathbf{e}_1,...,\mathbf{e}_{l-1}\}$ be an ordered basis of $\mfg_{-1}.$  Recall that there are the relations among generators: \be \label{Eq1} [\mathbf{e}_i,\mathbf{e}_j]=0=[\mathbf{f}_i,\mathbf{f}_j]\hspace{0.1in} \mathrm{for \hspace{0.07in}all}\hspace{0.07in} 1\leq i,j\leq l-1,\ee
 \be \label{Eq2} [\mathbf{f}_i,\mathbf{e}_j]=0 \hspace{0.1in} \mathrm{for \hspace{0.07in}all}\hspace{0.07in} 1\leq i\ne j\leq l-1,\ee

  \be \label{Eq3} [\mathbf{f}_1,\mathbf{e}_1]=[\mathbf{f}_2,\mathbf{e}_2]=\cdots =[\mathbf{f}_{l-1},\mathbf{e}_{l-1}](=\mathbf{h}),\ee where $\mathbf{h}:=E_{l+1,1}$ is the generator of the one dimensional subspace $\mfg_{-2}.$

  Let $\phi\in \mathrm{End}(\mfg_{-1}^{(1)})\oplus \mathrm{End}(\mfg_{-1}^{(2)}).$ In order for $\phi$ to become a derivation $\varphi$ on $\mfm$, i.e., $\iota(\varphi)=\varphi|_{\mfg_{-1}}=\phi$, $\phi$ must satisfy the derivation conditions for the relations (\ref{Eq2}) and (\ref{Eq3}) only. Note that the relations in (\ref{Eq1}) do not impose any restriction on $\phi$ since $\phi$ is an endomorphism preserving the decomposition $\mfg_{-1}=\mfg_{-1}^{(1)}\oplus \mfg_{-1}^{(2)}.$
  \smallskip

  To be explicit, we write $\phi(\mathbf{e}_i)=\sum_{k=1}^{l-1}a_{k,i}\mathbf{e}_k$ and  $\phi(\mathbf{f}_i)=\sum_{k=1}^{l-1}b_{k,i}\mathbf{f}_k$. Then we may consider $a_{i,j}$ and $b_{i,j}$ as coordinate functions of $\mathrm{End}(\mfg_{-1}^{(1)})\oplus \mathrm{End}(\mfg_{-1}^2).$
   From (\ref{Eq2}), we have $$0=\phi([\mathbf{f}_i, \mathbf{e}_j])=[\phi(\mathbf{f}_i), \mathbf{e}_j]+[\mathbf{f}_i,\phi(\mathbf{e}_j)]=(a_{i,j}+b_{j,i})\mathbf{h}.$$
   The last equality follows from the equations (\ref{Eq2}) and (\ref{Eq3}).
   Therefore we get \\
   $(l-1)(l-2)$ equations  $$a_{i,j}+b_{j,i}=0 \hspace{0.1in} ( 1\leq i\ne j\leq l-1).$$

From (\ref{Eq3}), we fix $(l-2)$ relations
 \be \label{eq4} [\mathbf{f}_1,\mathbf{e}_1]=[\mathbf{f}_i,\mathbf{e}_i] \hspace{0.1in}\textrm{for}\hspace{0.08in} i=2,3,...., l-1.\ee
 Applying $\phi$ on both sides of (\ref{eq4}), we get \be \label{eq5} (a_{1,1}+b_{1,1})\mathbf{h}=(a_{i,i}+b_{i,i})\mathbf{h},\ee  which gives $(l-2)$ equations
  $$a_{i,i}+b_{i,i}=a_{1,1}+b_{1,1}  \hspace{0.1in}( i=2,3,...,l-1).
$$ Therefore $\mfg_0^\prime$ is defined by $l(l-2)$ linear equations. We can easily see that all these equations are independent and  hence the codimension of $\mfg_0^\prime$  is $l(l-2)$. Since the dimension of  $\mathrm{End} (\mfg_{-1}^{(1)})\oplus \mathrm{End} (\mfg_{-1}^{(2)})$ is $2(l-1)^{2},$ the dimension of $\mfg_0^\prime$ is $(l-1)^2+1,$ which is exactly the dimension of $\mfg_0.$ Therefore we conclude that
 $\mfg_0^\prime=\mfg_0$. This completes the proof.
\end{proof}

\begin{proposition}\label{prop:type III}
Let $G/P$ be a homogeneous space of type $III$, and let $U$ be a connected open subset of $G/P$.  Then $\mathcal{A}_{\oplus}(U)=\mfg.$
\end{proposition}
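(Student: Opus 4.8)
\textbf{Proof proposal for Proposition \ref{prop:type III}.}

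The plan is to mirror the structure of the proofs of Propositions \ref{pro:typeI} and \ref{pro:typeII}, replacing the roles of $\mfg(\mfm)$ and $\mfg(\mfm,\mfg_0^\prime)$ (the VMRT-constrained prolongation) with the prolongation $\mfg(\mfm,\mfg_0^\prime)$ where now $\mfg_0^\prime$ is the subalgebra of $\mfg_0(\mfm)$ preserving the decomposition $\mfg_{-1}=\mfg_{-1}^{(1)}\oplus\mfg_{-1}^{(2)}$, as set up just before Lemma \ref{Lemma1}. First I would reduce to the local chart: suppose $U\subset M(\mfm)$ is a connected open subset containing $o$. For an infinitesimal automorphism $X\in\mca_\oplus(U)$ preserving both $D$ and the decomposition $D=D^{(1)}\oplus D^{(2)}$, the presentation $f_X=\prod_{k\geq-\mu}f_X^k$ takes values in $\mfg(\mfm)$, and the decomposition-preserving condition forces the degree-zero part $f_X^0$ to land in $\mfg_0^\prime$. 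By the inductive definition of the higher prolongation $\mfg_k=\{u\in\mfg_k(\mfm)\mid[u,\mfg_{-1}]\subset\mfg_{k-1}\}$, the entire presentation then takes values in $\mfg(\mfm,\mfg_0^\prime)$; this gives that $\psi_x$ descends to an inclusion $\psi_x^\oplus:\mfg(\mfm,\mfg_0^\prime)\hookrightarrow\mca_\oplus(U)$ for any $x\in U$.

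The decisive step is to invoke Lemma \ref{Lemma1}, which asserts $\mfg_0=\mfg_0^\prime$. Consequently $\mfg(\mfm,\mfg_0^\prime)=\mfg(\mfm,\mfg_0)$, and by the last assertion of Proposition \ref{prop1} (Yamaguchi's theorem) — since type $III$ algebras are isomorphic to $(A_l,\{\alpha_1,\alpha_k\})$ or $(C_l,\{\alpha_1,\alpha_l\})$ and are therefore \emph{not} among the excluded cases $(A_l,\{\alpha_1\})$ or $(C_l,\{\alpha_1\})$ for prolongation of the pair $(\mfm,\mfg_0)$ — we conclude $\mfg(\mfm,\mfg_0)=\mfg$. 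In particular $\mfg(\mfm,\mfg_0^\prime)$ is finite dimensional, so there is an $l$ with $\mfg_k=0$ for $k\geq l$, hence $f_X^k\equiv0$ for $k\geq l$ and $f_X$ is genuinely $\mfg$-valued. Fixing $x=o$ and setting $a=X(o)$, the uniqueness of solutions to the differential equation (\ref{differential eq}) gives $X=\psi_o^\oplus(a)$, so $\psi_o^\oplus$ is surjective and hence an isomorphism. This yields
$$\mca_\oplus(U)=\mca_\oplus(M(\mfm))=\mfg(\mfm,\mfg_0^\prime)=\mfg(\mfm,\mfg_0)=\mfg.$$

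For a general connected open subset $U\subset G/P$, I would argue exactly as in Proposition \ref{pro:typeI}: the same computation on any principal chart $M(\mfm^\prime)$ corresponding to a parabolic $P^\prime$ gives $\mca_\oplus(U^\prime)=\mfg$ for $U^\prime\subset M(\mfm^\prime)$, and Lemma \ref{extension} extends any infinitesimal automorphism on $M(\mfm)$ across the charts to all of $G/P$, so $\mca_\oplus(G/P)=\mfg$. Choosing $U^\prime\subset M(\mfm^\prime)$ with $U^\prime\subset U$ and comparing restrictions then forces $\mca_\oplus(U)=\mfg$. The one point requiring care — and the main obstacle — is verifying that the decomposition-preservation condition on $X$ is exactly the constraint cutting out $\mfg_0^\prime$ inside $\mfg_0(\mfm)$ at the level of presentations, so that the descent $\psi_x^\oplus$ is well defined; once Lemma \ref{Lemma1} is granted, the rest is formally identical to the type $I$ and type $II$ arguments. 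I should also confirm that Lemma \ref{extension}, which is stated for infinitesimal automorphisms preserving $D$, respects the decomposition $D=D^{(1)}\oplus D^{(2)}$ under the coordinate change $b\mapsto hbh^{-1}$; this holds because the splitting is induced by the $G$-equivariant projections $\pi_i:G/P\to G/P_i$ and is thus preserved by the transition maps.
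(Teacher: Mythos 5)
Your proposal is correct and follows essentially the same route as the paper: reduce to the chart $M(\mfm)$, use the decomposition-preservation condition to descend $\psi_x$ to $\psi_x^\oplus:\mfg(\mfm,\mfg_0^\prime)\hookrightarrow\mca_\oplus(U)$, invoke Lemma \ref{Lemma1} together with Yamaguchi's theorem (Proposition \ref{prop1}) to identify $\mfg(\mfm,\mfg_0^\prime)=\mfg(\mfm,\mfg_0)=\mfg$ and get surjectivity, then globalize via Lemma \ref{extension} as in the type $I$ case. Your added remarks on why the descent is well defined and why the extension respects the splitting are details the paper leaves implicit, but they do not change the argument.
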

\begin{proof}
 The proof is similar to the proof of Proposition \ref{pro:typeII}. Suppose $U\subset G/P$ is an open subset of $M(\mfm)$ (with $o\in U$). Then by the definitions of $\mfg_0^\prime$ and $\mca_\oplus(U)$, for any $x\in U,$ $\psi_{x}:\mfg(\mfm)\hookrightarrow \mathcal{A}(U)$ descends to  $\psi_{x}^\oplus :\mfg(\mfm,\mfg_0^\prime)\hookrightarrow \mathcal{A}_\oplus(U)$. Since $\mfg(\mfm,\mfg_0^\prime)=\mfg(\mfm,\mfg_0)=\mfg$ is finite dimensional,
 as before, by fixing $x$, say, $x=o,$ we obtain  an isomorphism $\psi_{o}:\mfg(\mfm)\DistTo \mathcal{A}(U)$. Therefore we have $$\mca_\oplus(U)=\mca_\oplus(M(\mfm))=\mfg(\mfm,\mfg_0^\prime)=\mfg(\mfm,\mfg_0)=\mfg.$$
The proof for a general connected open subset $U\subset G/P$ is similar to that of Proposition \ref{pro:typeI}.
\end{proof}

\subsection{Extension of a local biholomorphic map}

\begin{proposition}\label{coro1}
Let $\Psi:U_1 \rightarrow U_2$ be a biholomorphic map between two connected open subsets of $G/P$. Suppose that $\Psi$ preserves  $D$, $\mathscr{C}$ or the decomposition $D=D^{(1)}\oplus D^{(2)}$ for  $G/P$ of types $I, II$ or $III$, respectively.
  Then $\Psi$ extends to an automorphism $\widetilde{\Psi}:G/P\rightarrow G/P.$
 \end{proposition}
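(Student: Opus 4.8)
The plan is to show that the local biholomorphism $\Psi$ induces a Lie algebra automorphism of $\mfg$ carrying the parabolic $\mfp$ to another parabolic $\mfp'$, and that this purely algebraic datum integrates to a global automorphism of $G/P$ extending $\Psi$. The engine is the characterization from Proposition~\ref{pro:typeI} (resp.\ \ref{pro:typeII}, \ref{prop:type III}): for any connected open $U \subset G/P$, the sheaf of infinitesimal automorphisms preserving the relevant structure satisfies $\mca(U) \cong \mfg$ (resp.\ $\mca_\msc(U) \cong \mfg$, $\mca_\oplus(U) \cong \mfg$). The first step is to observe that since $\Psi$ preserves $D$ (resp.\ $\msc$, the decomposition $D = D^{(1)} \oplus D^{(2)}$), pushforward of vector fields $X \mapsto d\Psi(X)$ carries infinitesimal automorphisms on $U_1$ of the structure to infinitesimal automorphisms on $U_2$ of the same structure. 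Applying the relevant characterization to $U_1$ and to $U_2$, this gives a Lie algebra isomorphism $d\Psi : \mca(U_1) \to \mca(U_2)$, i.e.\ an isomorphism $d\Psi : \mfg \to \mfg$ as abstract Lie algebras.

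Next I would identify the parabolic structure. Each point $x \in U_1$ determines, via evaluation of vector fields at $x$, the isotropy subalgebra $\mfp_x \subset \mfg \cong \mca(U_1)$ consisting of those infinitesimal automorphisms vanishing at $x$ (more precisely vanishing to the appropriate order so as to fix $x$ as a point of $G/P$); under the identification $\mca(U_1) \cong \mfg$ this is conjugate to the standard parabolic $\mfp$. Since $d\Psi$ is the pushforward along $\Psi$ and $\Psi(x) = y \in U_2$, it intertwines the evaluation maps, hence sends $\mfp_x$ onto $\mfp_{y}$, which under $\mca(U_2)\cong\mfg$ is again a parabolic subalgebra $\mfp'$ conjugate to $\mfp$. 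Thus I obtain an isomorphism of pairs $d\Psi : (\mfg,\mfp) \to (\mfg,\mfp')$. Because both $\mfp$ and $\mfp'$ are parabolic subalgebras of the same type (fixing the same $\triangle_1$ up to conjugation), the pair $(\mfg,\mfp')$ presents the \emph{same} homogeneous space, $G/P' \cong G/P$.

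An automorphism of the pair $(\mfg,\mfp)$ integrates, at the level of the adjoint group $G$, to an automorphism $\widetilde\Psi$ of $G/P$: an automorphism of $\mfg$ fixing the conjugacy class of $\mfp$ is realized by an element of $\mathrm{Aut}(\mfg) = \mathrm{Aut}(G/P)$ preserving the relevant flag structure, and since $G$ is adjoint this gives a well-defined biholomorphism $\widetilde\Psi : G/P \to G/P$. The final step is to verify that $\widetilde\Psi$ restricts to $\Psi$ on $U_1$. This follows because both $\widetilde\Psi$ and $\Psi$ induce the same map $d\Psi$ on the sheaf of infinitesimal automorphisms, and by property~(\ref{property 3}) of the prolongation together with the uniqueness of solutions to the differential equation~(\ref{differential eq}) (the uniqueness statement following Convention~\ref{convention1}), a local biholomorphism preserving $D$ is determined by its induced Lie algebra map; since $\widetilde\Psi$ and $\Psi$ agree infinitesimally at one point and induce the same algebra isomorphism, they agree on the overlap, whence $\widetilde\Psi$ extends $\Psi$.

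The main obstacle, and the step requiring the most care, is the passage from the abstract Lie algebra isomorphism $d\Psi : \mfg \to \mfg$ back to a geometric automorphism that genuinely extends $\Psi$ rather than merely agreeing with it abstractly. Concretely, one must check that the identification $\mca(U_i) \cong \mfg$ is compatible with the evaluation-at-a-point data in a way that pins down $\widetilde\Psi$ uniquely on $U_1$; this is where the uniqueness of the infinitesimal automorphism determined by its $1$-jet at a point (Convention~\ref{convention1} and the remark following it) is indispensable. A secondary subtlety is confirming that the target parabolic $\mfp'$ is conjugate to $\mfp$ and not merely of parabolic type, so that $G/P' \cong G/P$ canonically; this uses that $d\Psi$ preserves the grading structure induced by $\triangle_1$, which in turn follows from $\Psi$ preserving $D$ together with its weak derived flag (for type III, the finer decomposition $D^{(1)}\oplus D^{(2)}$), so that the depths and symbol algebras match.
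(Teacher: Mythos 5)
Your proposal follows essentially the same route as the paper's proof: use Propositions \ref{pro:typeI}, \ref{pro:typeII} and \ref{prop:type III} to identify $\mca(U_i)$ (resp.\ $\mca_\msc(U_i)$, $\mca_\oplus(U_i)$) with $\mfg$, transport the isotropy parabolic $\mfp_x$ (vector fields vanishing at $x$) to $\mfp_{\Psi(x)}$ via $d\Psi$, and integrate the resulting isomorphism of pairs $(\mfg,\mfp)\to(\mfg,\mfp')$ to an automorphism $G/P\to G/P'$. The only difference is that you spell out the final verification that $\widetilde\Psi$ genuinely restricts to $\Psi$ on $U_1$ (via uniqueness of a local biholomorphism determined by its action on $\mfg$ and the image of one point), a step the paper passes over with the phrase ``naturally extends''; this is a correct and welcome refinement rather than a different argument.
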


\begin{proof}
We note that since $\Psi:U_1\rightarrow U_2$ preserves $D$, $\msc,$ or the decomposition $D=D^{(1)}\oplus D^{(2)}$ for types $I, II$  or $III,$ respectively, the differential $d\Psi$ sends  $\mathcal{A}(U_1)$,
 $\mca_\msc(U_1)$ or $\mca_\oplus(U_1)$ isomorphically onto $\mathcal{A}(U_2)$,
 $\mca_\msc(U_2)$ or $\mca_\oplus(U_2)$ for types $I, II$  or $III,$ respectively.
Since $\mca(U_i)$, $\mca_\msc(U_i)$ or $\mca_\oplus(U_i)$ are all  identified with $\mfg$ for types $I$, $II$ or $III$, respectively, by Propositions \ref{pro:typeI}, \ref{pro:typeII} and \ref{prop:type III}, $d\Psi$ sends $\mfg$ isomorphically onto $\mfg$ for all types $I,II, III.$  Fix $x\in U_1$. Put $x^\prime=\Psi(x)$, and let
$\mfp$ and $\mfp^\prime$ be subalgebras of vector fields vanishing at $x$ and  $x^\prime$, respectively.  Then $\mfp$ and $\mfp^\prime$ are parabolic subalgebras of $\mfg$, and $d\Psi:\mfg\rightarrow \mfg$ sends $\mfp$ onto $\mfp^\prime$ isomorphically. Now  let $P$ and $P^\prime$ be the parabolic subgroups of $G$ with Lie algebras $\mfp$ and $\mfp^\prime,$ respectively.  Then the automorphism $d\Psi$ of Lie algebras, in turn, gives rise to an automorphism $G\rightarrow G$ sending $P$ to $P^\prime$ and hence  an automorphism $\widetilde{\Psi}:G/P \rightarrow G/P^\prime$, which naturally extends  $\Psi:U_1\rightarrow U_2.$
 \end{proof}

\section{Main result}\label{Section:main result}
In this section, we will complete the proof of the main theorem.
We begin with the following lemma which will be used throughout this section.
\begin{lemma}
Let $\sgla$ be a simple graded Lie algebra and $G/P$ its associated homogeneous space.
Then the natural action of $G$ on $G/P$ preserves $D$ for all simple graded Lie algebras $\sgla$ including $(A_l,\{\alpha_1,\})$ and $(C_l,\{\alpha_1,\})$. Furthermore,
it preserves $\msc$ or the decomposition $D=D^{(1)}\oplus D^{(2)}$  on $G/P$ for types II or III, respectively.
\end{lemma}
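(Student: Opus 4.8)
The plan is to establish each of the three assertions by tracing how the natural action of $G$ interacts with the constructions that define $D$, $\msc$, and the decomposition $D=D^{(1)}\oplus D^{(2)}$. The common thread is that all three objects are defined $G$-equivariantly on $G/P$, so preservation under the $G$-action should be essentially built into their construction rather than requiring a separate computation. First I would treat $D$. Recall from Subsection \ref{subsection:differential systems} that $T(G/P)$ is identified with the associated bundle $G\times_P(\mfg/\mfg^0)$ and that $D=G\times_P(\mfg^{-1}/\mfg^0)$. Under this identification the left action of $g\in G$ on $G/P$ has differential given by the left action on the associated bundle, which sends the fiber $G\times_P(\mfg^{-1}/\mfg^0)$ at $xP$ to the corresponding fiber at $gxP$. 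Equivalently, $\mfg^{-1}$ defines a \emph{left-invariant} subbundle of $TG$ descending to a $G$-invariant subbundle of $T(G/P)$, as noted in the second construction of $D$ in that same subsection. So $dg(D)=D$ for every $g\in G$, which gives the first claim; and since this construction makes no use of the exceptional cases, it applies uniformly to all $\sgla$ including $(A_l,\{\alpha_1\})$ and $(C_l,\{\alpha_1\})$.

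For the type II claim about $\msc$, the key point is that the VMRT $\msc\subset \P(D)\subset \P(T(G/P))$ is intrinsically defined from the geometry of minimal rational curves, which is automatically invariant under any biregular automorphism of $G/P$. Since the $G$-action is by biregular automorphisms, it permutes minimal rational curves among themselves, hence permutes the members of the minimal rational component $\msk$ and, for each $x$, carries $\msk_x$ to $\msk_{gx}$. Passing through the tangent map $\Theta_x$, whose formation commutes with automorphisms, the differential $dg:\P(T_x(G/P))\to \P(T_{gx}(G/P))$ carries $\msc_x$ to $\msc_{gx}$. Taking the union over $x$ shows that $dg(\msc)=\msc$, i.e. the $G$-action preserves $\msc$. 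One may alternatively argue homogeneously: $G$ acts transitively on $G/P$ and $\msc_o\subset \P(\mfg_{-1})$ is preserved by the isotropy action of $G_0$ (the linear isotropy at $o$ identified through Lemma \ref{Lemma-contact}), and equivariance then propagates the invariance of $\msc_o$ to all points of $G/P$.

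The type III claim is handled in the same spirit, using the two natural projections. From Subsection \ref{subsection:decomposition} the subbundles $D^{(1)},D^{(2)}$ are defined fiberwise as the kernels of the differentials $d\pi_i:T(G/P)\to \pi_i^*T(G/P_i)$ of the $G$-equivariant projections $\pi_i:G/P\to G/P_i$. Because each $\pi_i$ intertwines the $G$-actions on $G/P$ and $G/P_i$, we have $\pi_i\circ g = g\circ \pi_i$, so $dg$ carries $\ker d\pi_i$ at $x$ to $\ker d\pi_i$ at $gx$; that is, $dg(D^{(i)})=D^{(i)}$ for $i=1,2$. Hence the $G$-action preserves the decomposition $D=D^{(1)}\oplus D^{(2)}$, completing the proof.

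I do not anticipate a genuine obstacle here, since every object in question is manifestly functorial for automorphisms of $G/P$ and the $G$-action is by automorphisms; the only point requiring a little care is the type II case, where one must note that although Proposition \ref{Pro:VMRT} and the VMRT formalism of Section \ref{section:VMRT} are phrased for Fano manifolds of Picard number $1$, the homogeneous spaces of type II that carry a VMRT (the depth-one and contact cases) do satisfy these hypotheses, so the automorphism-invariance of $\msc$ is legitimately available. Making that applicability explicit is the one step where I would be slightly more careful than the routine equivariance bookkeeping used in the other two cases.
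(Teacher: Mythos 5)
Your proposal is correct and follows essentially the same route as the paper: all three invariances are obtained from the built-in $G$-equivariance of the constructions — the paper likewise pushes forward rational curves to get invariance of $\msc$ and uses the homogeneous description of $D$ and of $\mfg_{-1}^{(i)}$ (via the identification $dh(X)=hXh^{-1}$ on $T_o(G/P)=\mfg_{-\mu}\oplus\cdots\oplus\mfg_{-1}$) for the other two claims. Your verification of the type III case through the equivariance of the projections $\pi_i$ is a slightly cleaner packaging of the same fact, since the paper itself defines $\mfg_{-1}^{(i)}$ as $\ker d\pi_i$ at the base point.
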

\begin{proof}
Let $h\in G$, and let $x$ and $\tilde{x}$ be points of $G/P$ with $\tilde{x}=h\cdot x$, where $h\cdot x$ denotes the standard action.

Case of $\msc$ for type $II$:\\
 If $f:\P^1\rightarrow G/P$ is a rational curve through $x$ with the image $C,$ then the curve
 $\widetilde{C}=h\cdot C \subset G/P$ is a rational curve through $\tilde{x}= h\cdot x$ parametrized by $\tilde{f}:\P^1 \rightarrow G/P$ defined by
$\tilde{f}(u)=h\cdot f(u).$
Moreover, the differential  $dh:T_x(G/P)\rightarrow T_{\tilde{x}}(G/P)$ sends the tangent of the rational curve $f$ at $x$ to the tangent of the rational curve $\tilde{f}$ at $\tilde{x}$, i.e., $dh$ send isomorphically  $\msc_x$ to $\msc_{\tilde{x}}$. Therefore the natural action of $G$ on $G/P$ preserves $\msc$ for  type $II.$

Cases of $D$ and the decomposition for type $III$: \\
WLOG, we assume that $x=o$ is the base point of $G/P$. Recall that we have the canonical identifications
\smallskip

 \begin{tabular}{c}
$T_o(G/P)=\mfg_{-\mu}\oplus \cdots \oplus \mfg_{-1}$,\\
 $T_{\tilde{o}}(G/P)=h(\mfg_{-\mu}\oplus \cdots \oplus \mfg_{-1})h^{-1}=h\mfg_{\mu}h^{-1}\oplus \cdots\oplus h\mfg_{-1}h^-\hspace{0.08in} \textrm{for}\hspace{0.08in} \tilde{o}=h\cdot o,$\\
\end{tabular}
\smallskip

  \noindent
   and note that, under these identifications, the differential $dh:T_o(G/P)\rightarrow T_{\tilde{o}}(G/P)$  is defined by $dh(X)=hXh^{-1}$. Therefore $dh$ sends  $\mfg_{-1}=D_o$ onto $h\mfg_{-1}h^{-1}=D_{\tilde{o}}$ isomorphically, and hence the natural action of $G$ on $G/P$ preserves $D$. Furthermore, for type $III,$ we have $D_o^{(i)}=\mfg_{-1}^{(i)}$ and $D_{\tilde{o}}^{(i)}=h(\mfg_{-1}^{(i)})h^{-1}$ for $i=1,2,$
  and, by the definition of $dh$, $dh$ sends  isomorphically $D_o^{(i)}$ onto $D_{\tilde{o}}^{(i)}$. Thus the natural action of $G$ on $G/P$ preserves the decomposition $D=D^{(1)}\oplus D^{(2)}$ for type $III.$
\end{proof}

\subsection{ Differential system on $N$ coming from an action of $N$ on $G/P$ }\label{subsection:notation}
Let $A:N\times G/P\rightarrow G/P$ be an EC-structure on $G/P.$ Let $O=O_{A}$ be the Zariski open orbit of the action $A.$ Fix $x\in O$, and define a biregular map $a:N\rightarrow O$ by $a(h)=A(h,x).$
Let $D$ be the natural differential system on $G/P$  and let $$ D^{-\mu}\supset
D^{-\mu+1}\supset\cdots \supset D^{-1}=D $$ be the weak derived system induced by $D.$ Let  $E:=a^*(D|_{O})$ be a differential system on $N$ and let
\be \label{filtration2} E^{-\mu}\supset E^{-\mu+1}\supset \cdots \supset E^{-1}=E\ee
 be the weak derived system induced by the differential system $E$ on $N.$
 Note that the subbundles  $E^k$ of $TN$ are equal to $a^*(D^k|_{O})$ since the morphism $da$ is an isomorphism of Lie algebras of holomorphic vector fields.

 \begin{lemma}\label{lemma-invariant}
 Let $\mfg=\bigoplus_{k\in \Z}\mfg_k$ be a simple graded Lie algebra not isomorphic with  $(C_l, \{\alpha_1\})$ ($l\geq 2$), $(B_l, \{\alpha_l\})$ ($l\geq 3$) or $(G_2, \{\alpha_1\})$, and let $G,P$ and $N$ be its associated groups.  Let $E$ be a differential system on $N$ defined above. Then $E$ is invariant under the natural action of $N$ on $N$.
 \end{lemma}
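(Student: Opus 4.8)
The plan is to exploit the $N$-equivariance of the orbit map $a$ together with the $G$-invariance of $D$ proved in the preceding lemma, the key extra ingredient being the identification $\mathrm{Aut}^0(G/P)=G$, which is available exactly because we have excluded the three exceptional pairs $(C_l,\{\alpha_1\})$, $(B_l,\{\alpha_l\})$, $(G_2,\{\alpha_1\})$.

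First I would record the equivariance of $a$. Since $A$ is a group action and $a(h)=A(h,x)$, for every $n,h\in N$ we have $a(nh)=A(nh,x)=A(n,A(h,x))=A_n(a(h))$, where $A_n:=A(n,-):G/P\to G/P$. Writing $L_n$ for left translation by $n$ on $N$, this says $a\circ L_n=A_n\circ a$; thus $a$ intertwines the left-translation action on $N$ with the action $A$ restricted to $O$. Note also that $A_n$ maps $O$ onto $O$, since $O=N\cdot x$ is an $A$-orbit.

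The crucial step, which I expect to be the main obstacle, is to show that each $A_n$ preserves $D$; this is precisely where the exclusion of the exceptional cases enters. Each $A_n$ is a biregular automorphism of $G/P$, and because $N$ is connected (being unipotent) with $A_e=\mathrm{id}$, the family $\{A_n\}_{n\in N}$ lies in the identity component $\mathrm{Aut}^0(G/P)$. Under our hypothesis on $\mfg$, Subsection \ref{subsection-auto} gives $\mathrm{Aut}^0(G/P)=G$ (the adjoint group). Hence every $A_n$ is realized by the natural action of an element of $G$, and by the preceding lemma that action preserves $D$; therefore $(A_n)^*(D|_O)=D|_O$. For the excluded cases this argument breaks down, since there $\mathrm{Aut}^0(G/P)$ strictly contains $G$ and there is no reason for $A_n$ to respect the grading-defined system $D$.

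Finally I would combine the two facts. Differentiating $a\circ L_n=A_n\circ a$ and using $da_h(E_h)=D_{a(h)}$ yields $da_{nh}\big(dL_n(E_h)\big)=d(A_n)_{a(h)}(D_{a(h)})=D_{a(nh)}$, where the last equality is the $D$-invariance of $A_n$ established above. Since $da_{nh}$ is an isomorphism, we conclude $dL_n(E_h)=E_{nh}$ for all $n,h\in N$, i.e.\ $E$ is invariant under the left-translation action of $N$ on itself. Equivalently, at the level of pullbacks, $(L_n)^*E=(a\circ L_n)^*(D|_O)=(A_n\circ a)^*(D|_O)=a^*\big((A_n)^*(D|_O)\big)=a^*(D|_O)=E$, which is the desired invariance.
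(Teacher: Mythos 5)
Your proposal is correct and follows essentially the same route as the paper: both arguments hinge on the fact that, since $N$ is connected, the homomorphism $N\to\mathrm{Aut}(G/P)$ induced by $A$ lands in $\mathrm{Aut}^0(G/P)=G$ (this is where the exclusion of the three exceptional gradations is used), so each $A(n,-)$ acts as an element of $G$ and hence preserves $D$ by the preceding lemma, after which the equivariance $a\circ l^n=A(n,-)\circ a$ transfers the invariance of $D|_O$ to $E$. No gaps.
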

 \begin{proof}
 Fix $h_1, h_2\in N$ and choose $h\in N$ with $h_2=hh_1.$ Define $l^h:N\rightarrow N$ by $l^h(g)=hg$ for $g\in N.$ To prove the lemma,
 we need to show that the differential $dl^h:T_{h_1}N\rightarrow T_{h_2}N$ sends
 $E_{h_1}$ to $E_{h_2}.$ Let $y_i=A(h_i,x)\in O$ for $i=1,2.$

 Note that the homomorphism of groups $N \rightarrow \mathrm{Aut}(G/P)$ corresponding to  the action $A: N\times G/P\rightarrow G/P$  factors through a homomorphism $N\rightarrow \mathrm{Aut}^0(G/P)=G$ since $N$ is connected. Let $\xi\in G$ be the image of $h$ under this map $N\rightarrow \mathrm{Aut}^0(G/P)=G$. Then  the given action of $h$ on $O$, which corresponds to the natural action of $h$ on $N$, is the same as the natural action of $\xi$ on $O\subset G/P$, i.e.,  $A(h,z)=\xi \cdot z$ for all $z \in O.$ In particular, putting $z=y_1=A(h_1, x)$, we get $y_2=\xi\cdot y_1.$
 Therefore, we have the commutative diagram of isomorphisms of differentials
 $$\xymatrix{T_{h_1}N \ar[r]^-{da_{h_1}}\ar[d]_{dl^h}&T_{y_1}O\ar[d]^{d\xi}\\
T_{h_2}N\ar[r]^-{da_{h_2}}&T_{y_2}O}.$$
 Since the natural action of $G$ on $G/P$ preserves $D$, the differential $d\xi$ sends $D_{y_1}$ onto $D_{y_2}.$
Since we have $dl^h=(da_{h_2})^{-1}\circ d\xi \circ da_{h_1}$, and two differentials  $da_{h_i}$ send $E_{h_i}$ to $D_{y_i}$ for $i=1,2,$ $dl^h$ sends $E_{h_1}$ onto $E_{h_2}$. This proves the lemma.

 \end{proof}

\subsection{Construction of a local biholomorphic map}
Let $\mfm(o)=\bigoplus_{k=-\mu}^{-1}\mfm_k$ be the symbol algebra at the identity $o$ of $N$ associated with the differential system $E$. Let  $\omega:TN\rightarrow \mfn$  be the Maurer-Cartan form on $N$, and $\underline{\mfm}(o)=\bigoplus_{k=-\mu}^{-1}\underline{\mfm}_k$ the graded Lie algebra associated with the filtration on $\mfn$ $$\omega(E^{-\mu})\supset \cdots \supset \omega(E^{-1}).$$ Then since the derived system (\ref{filtration2}) is $N$-invariant by Lemma \ref{lemma-invariant}, the symbol algebra $\mfm(o)$ is nothing but the graded Lie algebra $\underline{\mfm}(o)$. On the other hand,  $\underline{\mfm}(o)$ is isomorphic to $\mfn$ as Lie algebras. Note that there is no canonical isomorphism between $\underline{\mfm}(o)$ and $\mfn.$  Indeed, a choice of a subspace $\mfn_k\subset \omega(E^{k}) $ with $\omega(E^{k})=\mfn_k \oplus \omega(E^{k+1})$ for each $k=-1,...,-\mu$ determines an isomorphism  of (graded) Lie algebras
$$\mfn=\bigoplus_{k=-\mu}^{-1}\mfn_k \stackrel{\sim}{\rightarrow} \underline{\mfm}(o)=\bigoplus_{k=-\mu}^{-1}\underline{\mfm}_k $$ and vice versa.
In the sequel,  by fixing one of such isomorphisms, we will identify three Lie algebras \be \label{equaliy1} \mfn=\underline{\mfm}(o)=\mfm(o).\ee

For $i=1,2,$ let $A_i:N\times G/P\rightarrow G/P$ be an  EC-structure on $G/P.$
For these actions $A_i$, we will use the same notation as for the action $A$ in Subsection \ref{subsection:notation}, only with the subscript $i$. For instance,  for each $i=1,2$, $O_i$ denotes the Zariski open subset of the action $A_i$,  $a_i:N\rightarrow O_i$  a biregular map defined by $a_i(h)=A_i(h,x_i)$, $\mfm_i(o)$  the symbol algebra at the identity $o\in N$ associated with $E_i=a_i^*(D|_{O_i})$ and etc.
To avoid any confusion arising from the notations, we make the notations as simple as possible.
\begin{notation}
Write $A_1(h,x)=h*x,$ $A_2(h,x)=h\star x$ and $A_0(h,x)=h\cdot x$ for $h\in N$ and $x\in G/P$, where $A_0$ is the natural action of $N$ on $G/P.$ For $h\in N$ (or $G$), define $$\phi^{i,h}: G/P\rightarrow G/P$$ by $\phi^{i,h}(x)=A_i(h,x)$ for $i=0,1,2.$  For $h\in N,$  recall that $l^h:N\rightarrow N$ is the map defined by the left multiplication by $h$.
\end{notation}

\begin{proposition} \label{prop3} Let $\mfg=\bigoplus_{k\in \Z}\mfg_k$ be a simple graded Lie algebra not isomorphic with $(C_l, \{\alpha_1\})$ ($l\geq 2$), $(B_l, \{\alpha_l\})$ ($l\geq 3$) or $(G_2, \{\alpha_1\})$,  and $G/P$  its associated homogeneous space.
 For $i=1,2$, let $A_i: N\times G/P\rightarrow G/P$ be an EC-structure on $G/P$ of $N$.
Then there is a group automorphism $F:N\rightarrow N$ such that

 \bn

 \item
 $dF_h\circ dl^h=dl^{F(h)}\circ dF_o$ for all $h\in N$,
 \item the biholomorphic map $\Psi:O_1\rightarrow O_2$ defined by $\Psi=a_2\circ F\circ a_1^{-1}$ satisfies
\bn
\item $\Psi(x_1)=x_2,$
\item $\Psi(h*x_1)= F(h)\star x_2$ for all $h\in N,$
\item for each $u\in O_1,$ the differential $d\Psi_{u}$ sends $D_u$ to $D_{\Psi(u)}$. Furthermore, it sends $\mathscr{C}_u$ to $\mathscr{C}_{\Psi(u)}$ for $G/P$ of type $II$, and  respects the decomposition $D=D^{(1)}\oplus D^{(2)}$ for $G/P$ of type $III$.
\en
\en
\end{proposition}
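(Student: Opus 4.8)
The plan is to obtain $F$ as the group automorphism integrating a suitable Lie algebra automorphism $f:\mfn\to\mfn$, and then to read off $\Psi=a_2\circ F\circ a_1^{-1}$ together with all of its properties. First I would record that, besides $E_i=a_i^*(D|_{O_i})$ being left $N$-invariant by Lemma \ref{lemma-invariant}, the pulled-back VMRT $\msc_i$ (type II) and the pulled-back splitting $E_i=E_i^{(1)}\oplus E_i^{(2)}$ (type III) are left $N$-invariant as well: the proof of Lemma \ref{lemma-invariant} applies verbatim, since $A_i(h,-)$ acts on $O_i$ as an element $\xi\in G$ and the natural $G$-action preserves $\msc$ and the decomposition. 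Consequently each of these structures is determined by its value at the identity $o$, i.e. by linear-algebraic data on $\mfn=T_oN$: the subspace $W_i:=\omega(E_i^{-1})$ and the filtration it generates, together with $\msc_{i,o}\subset\mathbb{P}(W_i)$, respectively the splitting $W_i=W_i^{(1)}\oplus W_i^{(2)}$.

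The construction of $f$ is the heart of the matter. Because $a_i$ is an isomorphism of the filtered manifolds $(N,E_i)\to(O_i,D|_{O_i})$ sending $o$ to $x_i$, and carries $\msc_i$ (resp. the decomposition) to $\msc$ (resp. the decomposition) on $G/P$ by definition of the pullback, the induced symbol-algebra isomorphism identifies $(\mfn;W_i,\msc_{i,o}\text{ or }W_i^{(1)}\oplus W_i^{(2)})$ with the homogeneous model $(\mfm;\mfg_{-1},\msc_o\text{ or }\mfg_{-1}^{(1)}\oplus\mfg_{-1}^{(2)})$ attached to $\mfg$; here I use the identification (\ref{equaliy1}) and the fact that the symbol of $E_i$ is $\mfm$ (as $a_i$ is an isomorphism and $D$ is of type $\mfm$). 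Since both models are isomorphic (the same up to the $G$-action), composing the two identifications for $i=1,2$ yields a graded Lie algebra automorphism $f:\mfn\to\mfn$ which, being built from these structure isomorphisms, automatically sends the $E_1$-filtration to the $E_2$-filtration, $\msc_{1,o}$ to $\msc_{2,o}$ (type II), and $W_1^{(j)}$ to $W_2^{(j)}$ (type III). As $N$ is unipotent, hence simply connected with $\exp:\mfn\to N$ an isomorphism, $f$ integrates to a unique group automorphism $F:N\to N$ with $dF_o=f$.

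With $F$ in hand, the first three properties are formal. Condition (1) is the chain rule applied to the homomorphism identity $F\circ l^h=l^{F(h)}\circ F$, differentiated at $o$ (using $F(o)=o$). For $\Psi=a_2\circ F\circ a_1^{-1}$, property (2a) holds because $a_1^{-1}(x_1)=o$, $F(o)=o$ and $a_2(o)=x_2$; property (2b) holds because $h*x_1=a_1(h)$, whence $\Psi(h*x_1)=a_2(F(h))=F(h)\star x_2$.

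For (2c) I would use $d\Psi=da_2\circ dF\circ(da_1)^{-1}$. The match $f(W_1)=W_2$ propagates from $o$ to every point: by $N$-invariance $E_1|_h=dl^h(W_1)$, so condition (1) gives $dF_h(E_1|_h)=dF_h\,dl^h(W_1)=dl^{F(h)}f(W_1)=dl^{F(h)}(W_2)=E_2|_{F(h)}$, and likewise for $\msc$ and for the decomposition. Since $da_i$ intertwines $E_i$ with $D|_{O_i}$ (and $\msc_i$ with $\msc$, and the splittings), the displayed composition carries $D_u$ to $D_{\Psi(u)}$, sends $\msc_u$ to $\msc_{\Psi(u)}$ for type II, and respects the decomposition for type III. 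The step I expect to be the genuine obstacle is the construction of $f$ in the second paragraph: one must verify that the symbol-algebra isomorphism induced by $a_i$ genuinely matches the extra structure, and, crucially, that for types II and III the differential system $E_i$ alone does \emph{not} pin down $f$ — in the depth-one case $D=T(G/P)$, so $E_i=TN$ carries no information whatsoever — so that the VMRT, respectively the decomposition, is the indispensable extra datum, and its $N$-invariance (the VMRT/decomposition analogue of Lemma \ref{lemma-invariant}) must be established so that it descends to linear data at $o$ that $f$ can match.
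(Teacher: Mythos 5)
Your proposal is correct and follows essentially the same route as the paper: $f$ is obtained by conjugating the symbol-algebra isomorphism $d\phi^{0,g}$ (for $g\in G$ with $x_2=g\cdot x_1$, which preserves $D$, $\msc$ and the decomposition) by the $da_i$'s, integrated to $F$ using simple connectedness of $N$, and $(2c)$ is verified from the $N$-invariance of $E_i$, $\msc_i$ and the splitting together with property $(1)$. Your propagation argument $dF_h(E_1|_h)=dl^{F(h)}f(W_1)=E_2|_{F(h)}$ is just a repackaging of the paper's chain of equalities, and your closing remarks about the VMRT/decomposition being the indispensable datum in types II and III match the paper's treatment.
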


\begin{proof} The proof is an adaption of Proposition $2.4$ of \cite{FH2}.
For $i=1,2,$ fix an identification $\mfn=\mfm_i(o)$ of Lie algebras as in (\ref{equaliy1}).
Take $g\in G$ such that $x_2=g\cdot x_1$. Then the differential $d\phi^{0,g}=d\phi^{0,g}_{x_1}:\mfm(x_1)\rightarrow \mfm(x_2)$ is an isomorphism of graded Lie algebras that  sends $D_{x_1}$ to $D_{x_2}$ and $\mathscr{C}_{x_1}$ to $\mathscr{C}_{x_2}$. In particular, it respects the decomposition for $G/P$ of type $III.$ Define a Lie algebra isomorphism $f:\mfn \rightarrow \mfn$ by  $f=da_2^{-1}\circ d\phi^{0,g}\circ d a_1$, where $da_i$ denotes $da_i=d(a_i)_o:\mfn=\mfm_i(o)\rightarrow \mfm(x_i)$ for $i=1,2.$ Then since $N$ is a simply connected Lie group, there is an automorphism $F$ such that $dF_o=f.$  Define $\Psi:O_1\rightarrow O_2$ by
 $\Psi=a_2\circ F\circ a_1^{-1}$. Now we will show that $\Psi$ satisfies the properties $(1)$ and $(2).$
 \smallskip

 $(1)$ is immediate from the automorphism property $F\circ l^h(g)=l^{F(h)}\circ F(g)$ for all $g,h\in N.$
$(a)$ of $(2)$ is trivial to check.  $(b)$ of $(2)$ follows from the equalities
$$\Psi(h*x_1)=a_2\circ F\circ a_1^{-1}(h*x_1)=a_2\circ F(h \hspace{0.03in} o)$$
$$=a_2(F(h) \hspace{0.03in}o)=F(h)*a_2(o)=F(h)*x_2.$$
Here $o$ is the identity of $N.$ The second and fourth equalities follow from the equivariance of $a_1$ and $a_2$, respectively, and the others from the definitions.

For $(c),$ fix $u\in O_1$ and let $h\in N$ such that $u=h*x_1$, i.e., $h=a_1^{-1}(u).$  Then we have
$$d\Psi_u(D_u)=da_2\circ dF_h\circ da_1^{-1}(D_{h* x_1})
=da_2\circ dF_h\circ da_1^{-1} (d\phi^{1,h}(D_{x_1}))\stackrel{(*)}{=}$$
$$da_2\circ dF_h\circ dl^h\circ da_1^{-1}(D_{x_1})\stackrel{(**)}{=}da_2\circ dl^{F(h)}\circ dF_o\circ da_1^{-1}(D_{x_1})=$$
$$da_2\circ dl^{F(h)}\circ f\circ da_1^{-1}(D_{x_1})\stackrel{(***)}{=}da_2\circ dl^{F(h)}\circ da_2^{-1}\circ d\phi^{0,g}(D_{x_1})\stackrel{(\star)}{=}$$
$$da_2\circ dl^{F(h)}\circ da_2^{-1}(D_{x_2})\stackrel{(\star\star)}{=}D_{F(h)\star x_2} \stackrel{(\star\star\star)}{=}D_{\Psi(u)}.$$
Here the equality $(*)$ follows from the equality $a_1\circ l^h=\phi^{1,h}\circ a_1$ and the fact that $D$ is invariant under the action $A_1$. The equalities  $(**)$ and $(***)$ follow from $(1)$ and the definition of $f$, respectively.
The equality $(\star)$ follows since $D$ is invariant under the natural action of $G$ on $G/P$. Note that for $y\in O_2$,  $a_2\circ l^{F(h)}\circ a_2^{-1}(y)=F(h)\star y$. Therefore since $D$ is invariant under the action $(h,y)\rightarrow F(h) \star y$, the equality $(\star\star )$ follows. The equality $(\star\star\star)$ is immediate from  $(b)$ of $(2)$. This proves the first statement of  $(c)$ of $(2)$.

\smallskip
To prove the second statement of $(c)$ of $(2)$, we note that in proving the first statement of $(c)$ of $(2)$ which involves $D$, we used only the fact that $D$ is invariant under any action of $N$ on $G/P$. Since $\mathscr{C}$ (resp. the decomposition $D=D^{(1)}\oplus D^{(2)}$) also is invariant under any action of $N$ for $G/P$ of type $II$ (resp. type $III$) by Lemma \ref{lemma-invariant},   the proof for the differential system $D$ works verbatim for VMRT (resp. the decomposition $D=D^{(1)}\oplus D^{(2)}$) for $G/P$ of type $II$ (resp. type $III$). This completes the proof of the proposition.
\end{proof}

\subsection{For cases where $\mathrm{Aut}^0(G/P)\ne G$}\label{exception}
When we proved  Lemma \ref{lemma-invariant} and Proposition \ref{prop3},  essential in proving our main theorem, we needed the condition  $\mathrm{Aut}^0(G/P)= G$.
Now we explain what happens to the simple graded Lie algebras $\sgla$ with $\mathrm{Aut}^0(G/P)\ne G$.
Recall from Subsection \ref{subsection-auto} that the simple graded Lie algebra $\sgla$ for which $\mathrm{Aut}^0(G/P)\ne G$ are $$ (C_l, \{\alpha_1\}) \hspace{0.04in}(l\geq 2), \hspace{0.08in}(B_l, \{\alpha_l\})\hspace{0.04in} (l\geq 3),\hspace{0.08in} \textrm{or} \hspace{0.08in}(G_2, \{\alpha_1\}),$$ and there are corresponding  the simple graded Lie algebras $\tilde{\mfg}$ such that  $\widetilde{G}=\mathrm{Aut}^0(G/P)$ and  $\widetilde{G}/\widetilde{P}$ is isomorphic to $G/P$; $$(A_{2l-1}\{\alpha_1\}),\hspace{0.08in} (D_{l+1},\{\alpha_{l+1}\}) \hspace{0.08in}\textrm{and} \hspace{0.08in}(B_3,\{\alpha_1\}).$$

 We remark that even though two simple graded Lie algebras $\mfg$ and $\tilde{\mfg}$ give the isomorphic homogeneous spaces, they produce non-isomorphic differential systems $D$ and $\widetilde{D}$ on $G/P$ and $\widetilde{G}/\widetilde{P}$, respectively. For instance, for $\mfg=(C_l,\{\alpha_1\})$ (this case  is not considered in our main theorem!), $D_o$ is not isomorphic with $\widetilde{D}_o$ as the diagrams below shows.

 \begin{center}
\tiny{\begin{tikzpicture}[scale=0.9]

   \draw (0,0) rectangle (3,3);
   \draw [very thin](0,2.5)-- (3,2.5);
   \draw [very thin](0,0.5)-- (3,0.5);
   \draw [very thin](0.5,0)-- (0.5,3);
   \draw [very thin](2.5,0)-- (2.5,3);
   \draw (1.5,1.5) node{$0$};
   \draw (0.25,2.75) node{$0$};
   \draw (2.75,0.25) node{$0$};
   \draw (2.75,1.5) node{$1$};
   \draw (1.5,2.75) node{$1$};
   \draw (0.25,3.2) node{$1$};
    \draw (1.5,3.2) node{$2l-2$};
    \draw (2.75,3.2) node{$1$};
    \draw (3.8,1.5) node{for $C_l\hspace{0.02in};$};
    \filldraw[fill=gray!60](0,0) rectangle (0.5,0.5);
    \filldraw[fill=gray!60](0,0.5) rectangle (0.5,2.5);
    \filldraw[fill=gray!20](0.5,0) rectangle (2.5,0.5);
    \draw (0.25,1.5) node{$-1$};
     \draw (2.75,2.75) node{$2$};
     \draw (0.25,0.25) node{$-2$};
      \draw (1.5,0.25) node{$-1$};

    \draw (6,0) rectangle (9,3);
   \draw [very thin](6,2.5)-- (9,2.5);
   \draw [very thin](6.5,0)-- (6.5,3);
  \draw (6.25,3.2) node{$1$};
    \draw (7.75,3.2) node{$2l-1$};
    \draw (10,1.5) node{for $A_{2l-1}.$};
 \filldraw[fill=gray!60](6,0) rectangle (6.5,2.5);
  \draw (7.75,1.25) node{$0$};
   \draw (6.25,2.75) node{$0$};
   \draw (6.25,1.25) node{$-1$};
   \draw (7.75,2.75) node{$1$};
    \end{tikzpicture}}
\end{center}

 This difference in the differential systems  makes things worse for the exceptions; $\widetilde{G}=\mathrm{Aut}^0(G/P)$ does not preserve $D$, and hence,  for a (connected) subgroup $H\subset G$, an action of $H$ on $G/P$ does not necessarily preserve $D$.
 In the above example, it is not difficult to take an element  $g\in \widetilde{G}_0\subset \widetilde{P}\subset \widetilde{G}=PSL_{2l}$ such that $g$ does not preserve $D_o=\mfg_{-1}$ (but $\widetilde{D}_o=\tilde{\mfg}_{-1})$, where the parabolic subgroup $\widetilde{P}\subset \widetilde{G}$ is the group of automorphisms fixing the base point $o\in G/P,$ and $\widetilde{G}_0$ is the group of automorphisms preserving the gradation $\tilde{\mfg}=\tilde{\mfg}_{-1}\oplus\tilde{\mfg}_0\oplus \tilde{\mfg}_1.$
 (Due to this `pathology' arising on $G/P$ for the exception cases, we will treat them separately when we prove the main theorem below.) However we observe that, despite this pathology with the exception case, there is a `nice' embedding of algebraic groups \\ $J:(G,P)\hookrightarrow(\widetilde{G},\widetilde{P})$ (or, equivalently, an embedding $J:(\mfg,\mfp)\hookrightarrow (\tilde{\mfg},\tilde{\mfp})$ of Lie algebras) such that
 \begin{enumerate}
 \item $J(P)=\widetilde{P}\cap J(G)$,
 \item there is an isomorphism of groups $\rho:J(N)\DistTo \widetilde{N}$ (or an isomorphism of Lie algebras $\rho:J(\mfn)\DistTo \tilde{\mfn}$),
 \item the induced isomorphism $J:G/P\rightarrow \widetilde{G}/\widetilde{P}$ sends $D$ into $\widetilde{D}$ (not onto).
 \end{enumerate}
 We would like to emphasize that in general the nilpotent group $J(N)$ does not coincide with,  but isomorphic to $\widetilde{N}$, and hence $N$ is isomorphic to $\widetilde{N}$ via $\rho\circ J$; see the exmaple right below.  We also note that  $J:\mfg\rightarrow \tilde{\mfg}$ does not preserve the gradation.

For the  above example, $J$ is  the embedding defined by $J(X)=X$ for $X\in \mfg$, and $\rho\circ J$ can be described informally as follows:  The light shaded $(-1)$ part is completely determined by the dark shaded $(-1)$ part, and hence the dark shaded $(-1)$ and $(-2)$ parts, which corresponds to $\mfm=\mfn^-\subset \mathfrak{sp}(2l)$, is ``isomorphic" via $\rho\circ J$ to the dark shaded $(-1)$ part that corresponds to $\tilde{\mfm}=(\tilde{\mfn})^-\subset \mathfrak{sl}(2l)$.

 Let us take another example; $(B_l,\{\alpha_1\})$. Note that,  in the notations of \cite{Ya}, $\mathfrak{so}(2l+1)$ consists of $(2l+1)\times (2l+1)$ matrices of the form
 \begin{displaymath}X=\left(\begin{array}{ccc}
A & a&B\\
\xi & 0&-a^\prime\\
C&-\xi^\prime&-A^{\prime}\\
\end{array}\right),
 \end{displaymath}
  Here $A, B, C$ are $l\times l$ matrices, and $B$ and $C$ satisfies $B=-B^\prime$, $C=-C^\prime$. $a$ and $\xi$ are column $l$-vector $a=(a_1,...,a_l)^t$ and row $l$-vector $\xi=(\xi_1,...,\xi_l)$ respectively such that $a^\prime=(a_l,...,a_1)$ and $\xi^\prime=(\xi_l,\xi_{l-1},...,\xi_1)^t$. Note that
   $\mathfrak{so}(2l)$ consists of $2l\times 2l$-matrices of the form  $X^0$, where $X^0$ is a matrix obtained from $X\in \mathfrak{so}(2l+1)$ by removing the center column and row.
  Given $X\in \mathfrak{so}(2l+1)$ above, let $\widetilde{X}$ be a $(2l+2)\times(2l+2)$-matrix defined by
 \begin{displaymath}
\widetilde{X}=\left(\begin{array}{cccc}
A & a& a&B\\
\xi & 0&0&-a^\prime\\
\xi & 0&0&-a^\prime\\
C&-\xi^\prime&-\xi^\prime &-A^{\prime}\\
\end{array}\right).
\end{displaymath}
Then we see that $\widetilde{X}$ belongs to $\mathfrak{so}(2l+2).$
Associating $\widetilde{X}$ to $X$ gives the desired embedding $J:\mfg=\mathfrak{so}(2l+1)\hookrightarrow \tilde{\mfg}=\mathfrak{so}(2l+2)$.
In this case, $J(\mfn)=\tilde{\mfn}$, i.e., $\rho:J(\mfm)\rightarrow \widetilde{\mfm}$ is the identity map.

We will use the above isomorphisms $J:G/P\DistTo \widetilde{G}/\widetilde{P}$ and $\rho\circ J:N \rightarrow \widetilde{N}$ to treat EC-structures on $G/P$ for exception cases. This idea was suggested by the referee.
\subsection{Completion of the proof of the main theorem}

\begin{theorem}[Main Theorem]\label{Theorem-Main} Let $\mfg=\bigoplus_{k\in \Z}\mfg_k$ be a simple graded Lie algebra not isomorphic with $(A_l,\{\alpha_1\})$, $(C_l,\{\alpha_1\})$, and $G/P$ its associated homogeneous space.
 Then there exists, up to isomorphism, a unique EC-structure of $N$ on $G/P$.
\end{theorem}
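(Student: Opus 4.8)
The plan is to separate existence from uniqueness, and to split the uniqueness argument according to whether $\mathrm{Aut}^0(G/P)$ equals $G$ or not. Existence is immediate from the Bruhat decomposition: the big cell of $G/P$ is a dense open subset biregular to $N$ on which $N$ acts by left translation, as already remarked after the statement of the main theorem (\cite{BL1}). So I would fix two EC-structures $A_1, A_2 : N\times G/P\to G/P$ with open orbits $O_1, O_2$ and base points $x_1, x_2$ as in Subsection \ref{subsection:notation}, and produce an isomorphism between them.

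Case 1: $\mathrm{Aut}^0(G/P)=G$, i.e. $\mfg$ is none of $(C_l,\{\alpha_1\})$, $(B_l,\{\alpha_l\})$, $(G_2,\{\alpha_1\})$. Since the hypothesis of the theorem also rules out $(A_l,\{\alpha_1\})$, the algebra $\mfg$ is of type $I$, $II$ or $III$. First I would apply Proposition \ref{prop3} to obtain a group automorphism $F:N\to N$ and a biholomorphic map $\Psi=a_2\circ F\circ a_1^{-1}:O_1\to O_2$ which preserves $D$ (resp. $\msc$, resp. the decomposition $D=D^{(1)}\oplus D^{(2)}$) for type $I$ (resp. $II$, resp. $III$) and satisfies $\Psi(h*x_1)=F(h)\star x_2$. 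By Proposition \ref{coro1}, $\Psi$ extends to an automorphism $\widetilde{\Psi}:G/P\to G/P$. It then remains to check that $(F,\widetilde{\Psi})$ is an isomorphism of EC-structures, that is, $\widetilde{\Psi}(h*x)=F(h)\star\widetilde{\Psi}(x)$ for all $h\in N$ and $x\in G/P$.

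To verify this I would first establish it on the dense orbit $O_1$: writing a point of $O_1$ as $h^\prime *x_1$ and combining the equivariance of $\Psi$ with the homomorphism property of $F$ gives
$$\Psi\bigl(h*(h^\prime *x_1)\bigr)=\Psi\bigl((hh^\prime)*x_1\bigr)=F(hh^\prime)\star x_2=F(h)\star\bigl(F(h^\prime)\star x_2\bigr)=F(h)\star\Psi(h^\prime *x_1).$$
For fixed $h$, the two maps $x\mapsto\widetilde{\Psi}(h*x)$ and $x\mapsto F(h)\star\widetilde{\Psi}(x)$ are morphisms $G/P\to G/P$ which agree on $O_1$ (where $\widetilde{\Psi}=\Psi$); since $O_1$ is dense and $G/P$ is separated, they agree everywhere, giving the required equivariance. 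For Case 2, where $\mathrm{Aut}^0(G/P)\ne G$ and hence $\mfg\cong(B_l,\{\alpha_l\})$ or $(G_2,\{\alpha_1\})$, I would use the isomorphisms $J:G/P\DistTo\widetilde{G}/\widetilde{P}$ and $\rho\circ J:N\DistTo\widetilde{N}$ of Subsection \ref{exception}, where $\widetilde{G}/\widetilde{P}$ is associated with $(D_{l+1},\{\alpha_{l+1}\})$ or $(B_3,\{\alpha_1\})$ and satisfies $\mathrm{Aut}^0(\widetilde{G}/\widetilde{P})=\widetilde{G}$. Transporting each $A_i$ through these isomorphisms yields EC-structures $\widetilde{A}_i$ of $\widetilde{N}$ on $\widetilde{G}/\widetilde{P}$ with open orbits $J(O_i)$; these fall under Case 1, so $\widetilde{A}_1\cong\widetilde{A}_2$, and conjugating the resulting isomorphism back by $J$ and $\rho\circ J$ shows $A_1\cong A_2$.

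The substantive content is already packaged into Propositions \ref{prop3} and \ref{coro1}, so the remaining work is largely formal. The two points requiring care are the passage in Case 1 from equivariance on the open orbit to equivariance on all of $G/P$ --- which relies on the separatedness of $G/P$ and the density of $O_1$ --- and, in Case 2, the verification that the transported maps $\widetilde{A}_i$ really are EC-structures of $\widetilde{N}$, which uses exactly the three compatibility properties of $J$ and $\rho$ recorded in Subsection \ref{exception}. I expect this last transfer step to be the main thing to get right, since it is where the non-isomorphism of the differential systems $D$ and $\widetilde{D}$ forces the argument to detour through $\widetilde{G}/\widetilde{P}$ rather than working directly on $G/P$.
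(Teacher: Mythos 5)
Your proposal is correct and follows essentially the same route as the paper: existence via the Bruhat decomposition, uniqueness in the case $\mathrm{Aut}^0(G/P)=G$ by combining Proposition \ref{prop3} with the extension Proposition \ref{coro1} and then spreading equivariance from the dense orbit $O_1$ to all of $G/P$ by a density argument, and the remaining cases $(B_l,\{\alpha_l\})$, $(G_2,\{\alpha_1\})$ by transporting the EC-structures through $J$ and $\rho\circ J$ to $\widetilde{G}/\widetilde{P}$. Your explicit computation $\Psi(h*(h'*x_1))=F(h)\star\Psi(h'*x_1)$ merely spells out the equality the paper asserts directly on $N\times O_1$.
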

\begin{proof}
Since  there is an EC-structure of $N$ on $G/P$ coming from the Bruhat decomposition of $G/P$ (\cite{BL1}), it remains to prove uniqueness. We consider two cases separately.

\smallskip
Case $1$: $\sgla$ not isomorphic with $(B_l, \{\alpha_l\})$  or $(G_2, \{\alpha_1\})$:\\
Let $A_i:N\times G/P\rightarrow G/P$ be two EC-structures on $G/P$ for $i=1,2.$ Then by Proposition \ref{prop3}, there is a biholomorphism $\Psi:O_1\rightarrow O_2$ such that the differential $d \Psi$ preserves  $D$, $\mathscr{C}$ or the decomposition $D=D^{(1)} \oplus D^{(2)}$ for $G/P$ of types $I,II$ or $III$, respectively. Then by Proposition \ref{coro1}, $\Psi$ extends to an automorphism $\widetilde{\Psi}:G/P\rightarrow G/P.$ Now we have to check that the automorphism $\widetilde{\Psi}$ of $G/P$ is an isomorphism  between two actions $A_i:N\times G/P\rightarrow G/P$, extending the isomorphism $\Psi$ between $ A_i:N\times O_i\rightarrow O_i$. Since
we have $$\widetilde{\Psi}\circ A_1|_{N\times O_1}=A_2\circ(F\times \widetilde{\Psi})|_{N\times O_1},$$ and $N\times O_1$ is Zariski open in $N\times G/P,$ the equality $\widetilde{\Psi}\circ A_1=A_2\circ(F\times \widetilde{\Psi})$ holds on $N\times G/P$, which means that two EC-structures $A_i$ on $G/P$ are isomorphic.

\smallskip
Case $2$: $\sgla$ isomorphic with$(B_l, \{\alpha_l\})$  or $(G_2, \{\alpha_1\})$:\\
Recall that Lemma \ref{lemma-invariant} and Proposition \ref{prop3} do not work for this case, but from Subsection \ref{exception} we have the  isomorphisms $J:G/P\DistTo \widetilde{G}/\widetilde{P}$ and $\rho\circ J:N \rightarrow \widetilde{N}$. These two isomorphisms induce a bijective correspondence between   actions of $N$ on $G/P$  and actions of $\widetilde{N}$ on $\widetilde{G}/\widetilde{P}.$ Therefore, uniqueness of EC-structures of $N$ on $G/P$ follows from that of $\widetilde{N}$ on $\widetilde{G}/\widetilde{P}.$
\end{proof}

We remark that our result implies that if  $N_1$ and $N_2$ are subgroup of $G$ isomorphic to $N$, and
act on $G/P$  with an open  orbit, then $N_1$ and $N_2$  are conjugate. To see this, for $i=1,2,$ let $A_i: N\times G/P \rightarrow G/P$ be an EC-structure on $G/P$ given via a homomorphism $\varphi_i:N\rightarrow N_i\subset G\subset \mathrm{Aut}^0(G/P)$. Then the automorphism  $\widetilde{\Psi}:G/P\rightarrow G/P$ is given by an action of an element $h\in G.$ In the notations of the proof of Proposition \ref{coro1}, this is because $\widetilde{\Psi}$ is induced by a group isomorphism $(G,P)\rightarrow (G,P^\prime)$ and this isomorphism, in turn, is given as the conjugation by the element $h$ such that $P^\prime =h P h^{-1}.$
Thus, letting $F^\prime:N_1\rightarrow N_2$ be a map  given via the bihomorphic map $\Psi:O_1\rightarrow O_2$, $F^\prime$ is an isomorphism of group given as the conjugation by the element $h$. Note that  for the construction $\Psi:O_1\rightarrow O_2$, it is enough to hypothesize in Proposition \ref{prop3} that the actions $A_i$ are given via homomorphisms $\varphi_i:N\rightarrow G\subset \mathrm{Aut}^0(G/P)$. This is a slightly weaker condition than  $\mathrm{Aut}^0(G/P)=G $. Since in this case $N_i$ are already included in $G$,  this result holds for all simple graded Lie algebras $\sgla$.

\subsection{For projective spaces}
Combining the result of Hassett and Tschinkel(\cite{HT}) and our result, the projective space $\P^n$ is only a homogeneous space that allows more than two EC-structures of a relevant nilpotent group $N$:
If $n\geq 2,$ then there are many EC-structures of $\mathbb{G}_a^n$ on $\P^n$.
Let us give concrete examples of  non-isomorphic EC-structures for $n=2$ given in \cite{HT}.  In this case,  there are, up to isomorphism,  exactly two EC-structures on $\P^2$ (associated with the simple graded Lie algebra $(A_2, \{\alpha_1\})$).
They are given through two homomorphisms $M_1, M_2:\mathbb{G}_{a}^2\stackrel{\sim}{=}N\rightarrow SL(3)$ defined by
\begin{displaymath}M_1(a_1,a_2):= \left(\begin{array}{ccc}
1 & a_1&a_2\\
0 & 1&0\\
0&0&1
\end{array}\right),
\end{displaymath}

\begin{displaymath}M_2(a_1,a_2):= \left(\begin{array}{ccc}
1 & a_1&a_2+\frac{1}{2}a_1^2\\
0 & 1&a_1\\
0&0&1
\end{array}\right).
\end{displaymath}
Here, our unipotent radical $N$ of $P$ consists of matrices $M_1(a_1,a_2)$
for $(a_1,a_2)\in \C^2$.
\smallskip

If we view the projective space $\P^{2l-1}$  as associated with $\mfg=(C_l,\{\alpha_1\})$, then, as in the proof of Theorem \ref{Theorem-Main},  two isomorphisms $J:G/P\DistTo \widetilde{G}/\widetilde{P}$ and $\rho\circ J:N\DistTo \widetilde{N}$  give rise to one-to-one correspondence between EC-structures of $N$ on $G/P$ and EC-structures of $\widetilde{N}$  on $\widetilde{G}/\widetilde{P},$ where $\tilde{\mfg}=(A_{2l-1},\{\alpha_1\}).$

\end{document}